\pdfoutput=1
\documentclass[12pt]{amsart}

\usepackage{amssymb}
\usepackage{amsmath}
\usepackage{amscd}
\usepackage{float}
\usepackage{graphicx}
\usepackage{amsfonts}
\usepackage{pb-diagram}
\usepackage{eufrak}
\usepackage[linktocpage=true]{hyperref}
\usepackage{cleveref} 
 
\usepackage[utf8]{inputenc}
\usepackage{tikz}
\usepackage{verbatim}
\usepackage[mathcal]{euscript}
\usepackage{float}
\usepackage{cite}
\usepackage[normalem]{ulem}
\usepackage{colortbl}
\usepackage{mathtools}
\usepackage{environ}            
 \usetikzlibrary{plotmarks}
\allowdisplaybreaks
\usetikzlibrary{decorations.pathreplacing}
\usepackage[toc,page]{appendix}

\setlength{\topmargin}{0cm}
\setlength{\textheight}{22cm}
\setlength{\textwidth}{16cm}
\setlength{\oddsidemargin}{-0.1cm}
\setlength{\evensidemargin}{-0.1cm}

\newtheorem{thm}{Theorem}
\newtheorem{cor}{Corollary}
\newtheorem{lem}{Lemma}
\newtheorem{prop}{Proposition}
\newtheorem{defn}{Definition}
\newtheorem{rem}{Remark}

\newtheorem{quest}{Question}

\newcounter{Intro}

\newtheorem*{thm:MDKK}{Theorem \ref{thm:MDKK}}
\newtheorem*{MainThm:LeeHomology}{Theorem \ref{MainThm:LeeHomology}}
\newtheorem*{thm:UnorientedJonesISInvariant}{Theorem \ref{thm:UnorientedJonesISInvariant}}
\newtheorem*{thm:ComponentCount}{Theorem \ref{thm:ComponentCount}}
\newtheorem*{thm:CoreMantleInvariant}{Theorem \ref{thm:CoreMantleInvariant}}
\newtheorem*{thm:unoriented-link-invariant-homology}{Theorem \ref{thm:unoriented-link-invariant-homology}}

\theoremstyle{definition}
\newtheorem{ex}{Example}

\def\s{\mathfrak{s}}

\let \ttorg \tt \def \tt{\ttorg \obeyspaces}

\makeatletter

\long\def\M#1{\leavevmode\setbox\@tempboxa\hbox{#1}\@tempdima\fboxrule
    \advance\@tempdima \fboxsep \advance\@tempdima \dp\@tempboxa
   \hbox{\lower \@tempdima\hbox
  {\vbox{\hrule \@height \fboxrule
          \hbox{  \hskip\fboxsep
          \vbox{\vskip\fboxsep \box\@tempboxa\vskip\fboxsep}\hskip
                 \fboxsep\vrule \@width \fboxrule}%
                  }}}}
\makeatother

\newcommand{\BZ}{\mathbb{Z}}

\newcommand{\Across}{\raisebox{-0.25\height}{\includegraphics[width=0.5cm]{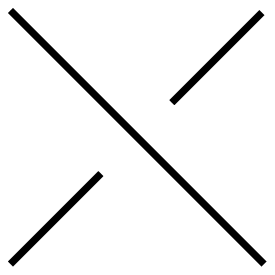}}}

\newcommand{\Asmooth}{\raisebox{-0.25\height}{\includegraphics[width=0.5cm]{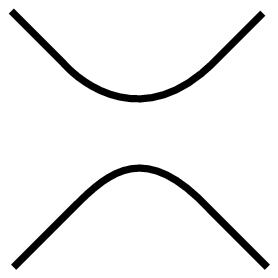}}}
\newcommand{\Bsmooth}{\raisebox{-0.25\height}{\includegraphics[width=0.5cm]{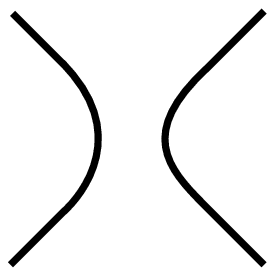}}}
\newcommand{\Rcurl}{\raisebox{-0.25\height}{\includegraphics[width=0.5cm]{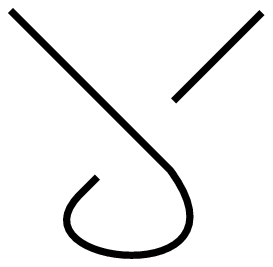}}}
\newcommand{\Lcurl}{\raisebox{-0.25\height}{\includegraphics[width=0.5cm]{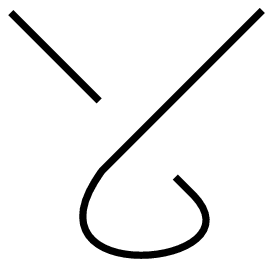}}}
\newcommand{\Arc}{\raisebox{-0.25\height}{\includegraphics[width=0.5cm]{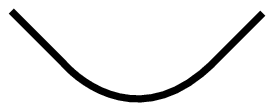}}}

\makeatletter

\long\def\UR#1{\leavevmode\setbox\@tempboxa\hbox{#1}\@tempdima\fboxrule
    \advance\@tempdima \fboxsep \advance\@tempdima \dp\@tempboxa
   \hbox{\lower \@tempdima\hbox
  {\vbox{\hrule \@height \fboxrule
          \hbox{  \hskip\fboxsep
          \vbox{\vskip\fboxsep \box\@tempboxa\vskip\fboxsep}\hskip
                 \fboxsep\vrule \@width \fboxrule}%
                  }}}}
\makeatother

\makeatletter

\long\def\LR#1{\leavevmode\setbox\@tempboxa\hbox{#1}\@tempdima\fboxrule
    \advance\@tempdima \fboxsep \advance\@tempdima \dp\@tempboxa
   \hbox{\lower \@tempdima\hbox
  {\vbox{
          \hbox{  \hskip\fboxsep
          \vbox{\vskip\fboxsep \box\@tempboxa\vskip\fboxsep}\hskip
                 \fboxsep\vrule \@width \fboxrule}%
                 \hrule \@height \fboxrule}}}}
\makeatother

\makeatletter

\long\def\UL#1{\leavevmode\setbox\@tempboxa\hbox{#1}\@tempdima\fboxrule
    \advance\@tempdima \fboxsep \advance\@tempdima \dp\@tempboxa
   \hbox{\lower \@tempdima\hbox
  {\vbox{\hrule \@height \fboxrule
          \hbox{\vrule \@width \fboxrule \hskip\fboxsep
          \vbox{\vskip\fboxsep \box\@tempboxa\vskip\fboxsep}\hskip
                 \fboxsep }%
                  }}}}
\makeatother

\makeatletter

\long\def\LL#1{\leavevmode\setbox\@tempboxa\hbox{#1}\@tempdima\fboxrule
    \advance\@tempdima \fboxsep \advance\@tempdima \dp\@tempboxa
   \hbox{\lower \@tempdima\hbox
  {\vbox{
          \hbox{\vrule \@width \fboxrule \hskip\fboxsep
          \vbox{\vskip\fboxsep \box\@tempboxa\vskip\fboxsep}\hskip
                 \fboxsep }%
                 \hrule \@height \fboxrule}}}}
\makeatother

\setcounter{tocdepth}{3}
\makeatletter
\def\l@subsection{\@tocline{2}{0pt}{2.5pc}{5pc}{}}
\def\l@subsubsection{\@tocline{2}{0pt}{5pc}{7.5pc}{}}
\makeatother

\begin{document}

\date{}

\title{\bf Unoriented Khovanov Homology}

\author{Scott Baldridge}
\address{Department of Mathematics, Louisiana State University\\
Baton Rouge, LA}
\email{sbaldrid@math.lsu.edu}

\author{Louis H. Kauffman}
\address{Department of Mathematics, Statistics and Computer Science\\ 851 South Morgan Street\\ University of Illinois at Chicago\\
Chicago, Illinois 60607-7045} 

\address{Department of Mechanics and Mathematics\\
Novosibirsk State University\\
Novosibirsk, Russia}
\email{kauffman@uic.edu}

\author{Ben McCarty}
\address{Department of Mathematical Sciences, University of Memphis\\
Memphis, TN}
\email{ben.mccarty@memphis.edu}

\maketitle

\thispagestyle{empty}

\begin{abstract}
The Jones polynomial and Khovanov homology of a classical link are invariants that depend upon an initial choice of orientation for the link.  In this paper, we give a Khovanov homology theory for unoriented virtual links.  The graded Euler characteristic of this homology is proportional to a similarly-defined unoriented Jones polynomial for virtual links, which is a new invariant in the category of  non-classical virtual links.  The unoriented Jones polynomial continues to satisfy an important property of the usual one: for classical or even virtual links, the unoriented Jones polynomial evaluated at one is two to the power of the number of components of the link. As part of extending the main results of this paper to non-classical virtual links, a new framework for computing integral Khovanov homology is described that can be efficiently and effectively implemented on a computer.  We define an unoriented Lee homology theory for virtual links based upon the unoriented version of Khovanov homology.
\end{abstract}

\section{Introduction}

It is well-known that the Kauffman bracket polynomial does not require the choice of an orientation on the link.  However, the Jones polynomial and Khovanov homology based upon it does.  The same is true for virtual links: The Khovanov homology defined in \cite{ArbitraryCoeffs, DKK} is an {\em oriented} virtual link invariant.  In this paper, we show how to get polynomial and homology invariants of the underlying link that are independent of the orientation.

In the original Khovanov invariants, the orientation was used to determine an overall grading shift in the bracket homology  (cf. \Cref{theorem:khovanov-homology} and \Cref{thm:MDKK}).  (The bracket homology is the ``categorification'' of the Kauffman bracket of a diagram.)  We introduce appropriate  grading shifts that also lead to an invariant homology that does not require $L$ to be oriented.  To describe these shifts, we  need three numbers for any virtual diagram: $s_+$, $s_-$, and $m$. The number $m$ is the number of {\em mixed-crossings} of the diagram, i.e., the number of classical crossings between different components of the link.  Each component can also have several {\em self-crossings}.  The self-crossings bifurcate into two types, $s_+$ and $s_-$, that correspond to the number of positive and negative self-crossings of the diagram.

Let $(C(D),\partial)$ be the bracket complex defined in \Cref{Khovanov-Homology} for a diagram $D$ of an unoriented link $L$. The unoriented Khovanov chain complex, $\tilde{C}(D)$, is a gradings-shifted version of the bracket complex:
$$\tilde{C}(D)=C(D)[-s_- - \frac12 m]\{s_+ - 2s_- - \frac12 m \}.$$ 
Here the first grading shift is for the homological grading and the second is for the $q$-grading of the bi-graded complex.  The homology of this complex, called  {\em unoriented Khovanov homology}, is an invariant of the link (cf. \Cref{thm:unoriented-link-invariant-homology}):

\begin{thm:unoriented-link-invariant-homology}  Let $L$ be an unoriented virtual link. The unoriented Khovanov homology, denoted $\widetilde{Kh}(L)$, can be computed from any virtual diagram of $L$ and is a virtual link invariant. 
\end{thm:unoriented-link-invariant-homology}

For a classical link, the unoriented Khovanov homology can be defined directly from an unoriented link diagram (cf. Section~\ref{Khovanov-Homology}).  For virtual links, the present definition of the chain complex requires an initial choice of an orientation for the link diagram.  The chain complex is, up to isomorphism, independent of the choice of orientation. In this sense our homology is  unoriented. It remains an open problem how to define the chain complex without choosing an orientation for the link diagram when the link is virtual.

\begin{quest}
For virtual links, can unoriented Khovanov homology be defined directly from an unoriented  link diagram without ever choosing an orientation? \label{quest:directly-from-diagram}
\end{quest}

The bracket homology and grading-shifts also leads to an unoriented version of Lee homology:

\begin{MainThm:LeeHomology}
Let $L$ be an unoriented virtual link. The unoriented Lee Homology, $Kh'(L)$, is an invariant of the link $L$. 
\end{MainThm:LeeHomology}

The proofs of \Cref{thm:unoriented-link-invariant-homology} and \Cref{MainThm:LeeHomology} start by calculating the bracket homology $H(D)$, of the bracket complex $C(D)$, associated to a link diagram $D$, and then shifting gradings appropriately to get an invariant of the link $L$ (cf. \Cref{Khovanov-Homology}  and \Cref{section:bracket-homology-for-virtual-links}).  Calculating this homology using the theoretical framework of \cite{ArbitraryCoeffs} and \cite{DKK} involves several choices not required of classical links: source-sink orientations, cut systems, base points, global orders, etc.  This theoretical framework is easy to describe to a {\em human} by decorating these choices on {\em pictures} of the link diagram and its hypercube of states.  However, since 2007 \cite{ArbitraryCoeffs}, there has not been a computer program for calculating this version of the homology because it is hard to encode these same choices into a computer data structure that can then be used to calculate the differential.  In this paper, we build a new theoretical framework for this homology around the PD notation of a link diagram. The PD notation  of a link diagram is a set of four-tuples that encodes the diagram by labeling the arcs of each component in increasing numerical order, and using these numbers to record each crossing.  For example, the PD notation for the virtual trefoil described in \Cref{section:bracket-homology-for-virtual-links} is $PD(vT)=\{ (4,3,1,2), (1,4,2,3)\}$. In our framework, the data structures needed to calculate the homology can be derived directly from the PD notation, making the problem amenable to programing for a computer. In fact, the authors together with Heather Dye and Aaron Kaestner have created a program to compute these homologies from the PD notation of a link using this framework.

When we speak of theoretical framework we include the formal method for representing virtual links. Each change in formal representation has its own properties, and the search for the best formal representation for eliciting computation has driven this research.

The following theorem says that the homology so obtained is invariant.  For a detailed description, see see \Cref{section:bracket-homology-for-virtual-links}.

\bigskip

\begin{thm:MDKK} \label{thm:MDKK} Given two PD notations, $PD(D)$ and $PD(D')$, for the same  unoriented virtual link $L$, the bracket homology of each will be isomorphic up to a grading shift.  In particular, for some numbers $a$ and $b$, 
$$H(PD(D))\cong H(PD(D'))[a]\{b\}.$$
\end{thm:MDKK}

\Cref{thm:MDKK} is not a direct translation of \cite{ArbitraryCoeffs, DKK} from a diagrammatic representation of virtual links into PD notation.  This new framework requires a careful synthesis of results from, in chronological order,  Bar-Natan \cite{DB2}, Manturov, Lipshitz/Sarkar \cite{LipSar}, Dye/Kaestner/Kauffman, and Kamada \cite{Kamada4}.   \Cref{thm:MDKK} is new and useful.

While the virtual link is unoriented, the PD notation provides an orientation  (see Question~\ref{quest:directly-from-diagram}). The theorem says that the bracket homology is independent of orientations up to a grading shift.  Furthermore, the bracket homology is isomorphic to Khovanov homology and unoriented Khovanov homology up to a grading shift.

An immediate consequence of \Cref{thm:unoriented-link-invariant-homology}  and \Cref{prop:partial-determines-grading} is that the graded Euler characteristic of the unoriented Khovanov homology defines a polynomial invariant that does not depend on the orientation of the link:
 $$\chi_q(\widetilde{Kh}(L)) = \sum_{i,j\in \frac12\BZ} (-1)^i q^j \dim(\widetilde{Kh}^{i,j}(L)).$$
One might reasonably call this the ``unoriented Jones polynomial'' for the link (see $J^0_L$ in \Cref{unoriented-jones-poly-for-virt-links}).  However, this choice has two substantial deficiencies.  First, the polynomial has complex, not real, coefficients.  Second, even for classical links, evaluating the polynomial at $q=1$ is not always positive as it is for the usual Jones polynomial. A significant amount of this paper is dedicated to correcting both of these deficiencies. It turns out that the  solution---cores and mantles---can be applied to fix other problems that have come up in the virtual link theory literature.  

We can get a hint of how to fix these deficiencies by looking at classical links.  For classical links, the required normalization is to multiply $\chi_q(\widetilde{Kh}(L))$ by $(-1)^{\lambda}$ where $\lambda =\sum_{i<j} Lk(K_i, K_j)$. (Note that this normalization does require choosing an orientation, but in \Cref{sec:unoriented-jones-poly-classical} we show that $(-1)^{\lambda}$ is independent of that choice.)  In \Cref{unoriented-jones-poly-for-virt-links}, we define a generalization of $\lambda$, denoted $\widetilde{\lambda}$, based upon a modified linking number between different components.  This number, which is equal to the original $\lambda$ when the link is classical, solves the first deficiency:

\begin{thm:UnorientedJonesISInvariant}
The unoriented Jones polynomial, defined by
$$\widetilde{J}_L(q) = (-1)^{\left(\tilde{\lambda} - s_- - \frac12 m \right)} q^{\left(s_+-2s_--\frac12 m\right)} \langle L\rangle,$$
is an unoriented virtual link invariant.  Moreover, $\widetilde{J}_L \in \mathbb{Z}[q^{-\frac12},q^{\frac12}]$.
\end{thm:UnorientedJonesISInvariant}

The definition of $\tilde{\lambda}$ also solves the second deficiency.  An {\em even link} is a virtual link in which there are an even number of classical mixed-crossings for every component in a diagram of the link (cf. \cite{Kamada, MWY}).  Even links are sometimes called 2-colorable links in the literature (cf.  \cite{Rushworth2}).  Since all virtual knots and classical links are even, one can often generalize theorems of virtual knots and classical links to even virtual links. The next theorem is new in the literature for the usual (oriented) Jones polynomial for {\em non-classical} virtual links $L$.  It also shows that $\tilde{\lambda}$ correctly addresses the second  issue for the unoriented Jones polynomial (see the following and \Cref{cor:ComponentCountforUnrientedJones}):

\begin{thm:ComponentCount}
If $L$ is an oriented virtual link with $\ell$ components, then 
\[\widetilde{J}_L(1) = J_L(1) = \begin{cases} 
      2^{\ell} & \text{if }L \text{ is even} \\
      0 & \text{if  }L\text{ is odd.} 
   \end{cases}
\]
The number, $J_L(1)$, is independent of the choice of orientation.
\end{thm:ComponentCount}

Thus, we prove that the usual (oriented) Jones polynomial, $J_L$, counts the number of components of a virtual link when the link is even, extending this well-known property for classical links, and that the unoriented Jones polynomial, $\widetilde{J}_L$, also preserves this property.
 
 \begin{rem}
 Aspects of \Cref{thm:UnorientedJonesISInvariant} and \Cref{thm:ComponentCount} have been known about classical links since Jones. Morton  gave a specific formula for the change in the Jones polynomial under change in orientation of a component \cite{Morton}. His argument was based on the skein relation and prior to the discovery of the bracket model. The bracket model and the Markov trace models make such formulae straightforward to deduce in the classical case. We have returned to this subject in the nontrivial case of virtuals and Khovanov homology.
 \end{rem}

In order to define $\widetilde{\lambda}$, we needed to introduce a new idea in virtual link theory that extends even link invariants to all virtual links:  the identification of the \emph{core} and \emph{mantle} of a virtual link, and more generally, a \emph{multi-core decomposition} (See \Cref{Subsec:EvenCore}).   A multi-core decomposition is the separation of a virtual link $L$ into a set of invariant sub-links, $L = C_1 \cup \ldots \cup C_n \cup M_n$, where each sub-link $C_i$ is even ($C_1$ is called {\em the} core), and the sub-link $M_n$ is either the empty link, or it is odd ($M_n$ is the final mantle).  

While there are a number of invariants for even virtual links in the literature, generalizing them to all virtual links has been elusive.  They fail to be invariants for odd virtual links because the definition of the invariant often depends heavily on each component having an even number of classical mixed-crossings.  By identifying an invariant even sub-link, the core, and eventually a set of invariant even sub-links in the multi-core decomposition, one can derive an invariant of odd links by applying the even link invariants to each even core in the decomposition.

\begin{thm:CoreMantleInvariant}
Any invariant of even links, $\Psi$, induces a tuple of invariants $(\Psi(C_1),\ldots, \Psi(C_n))$ for the multi-core decomposition of a virtual link $L$, and the tuple itself is an invariant. 
\end{thm:CoreMantleInvariant}

In the case of the unoriented Jones polynomial, the multi-core decomposition identifies a maximal set of maximal even sub-links on which the virtual link ``acts like'' an even link.  Thus, the unoriented Jones polynomial for odd links defined in this paper is the polynomial that is the ``closest to'' the  unoriented Jones polynomial for classical links.\\

\noindent {\bf Acknowledgements.}
Kauffman's work was supported by the 
Laboratory of Topology and Dynamics, 
Novosibirsk State University 
(contract no. 14.Y26.31.0025 
with the Ministry of Education and Science 
of the Russian Federation).  All three authors would like to thank William Rushworth for many helpful conversations and suggestions.

\section{Unoriented Jones Polynomial for Classical Links}\label{sect:unoriented-jones-poly}
\label{sec:unoriented-jones-poly-classical} 

We introduce the main ideas of this paper by reviewing and generalizing the oriented version of the Jones polynomial for classical links to the unoriented Jones polynomial.  Recall that to get the Jones polynomial, one multiplies the Kauffman bracket by a normalization factor  that depends on the writhe, i.e., $$J_L(q)=(-1)^{-n_-} q^{n_+ - 2n_-}\langle L\rangle$$ where $n_-$ and $n_+$ are the number of negative and positive (classical) crossings respectively\footnote{The Jones polynomial is usually written with $(-1)^{n_-}$ instead of $(-1)^{-n_-}$. These are equivalent since $n_-$ is an integer.  Later in this paper, we work with half-integer powers of $-1$  where the negative of a power is not equivalent: $(-1)^\frac12 \not= (-1)^{-\frac12}$.  In this context, $(-1)^{-n_-}$ is the correct normalization.} (cf. \cite{JO2,DB1,K}).  Here we are using the form of the Kauffman bracket given by 
\begin{eqnarray}
\langle \Across \rangle &=& \langle \Asmooth \rangle -q \langle \Bsmooth \rangle, \mbox{and} \label{eq:bracket-crossing}\\
\langle \bigcirc \cup D \rangle &=& (q^{-1}+q) \langle D\rangle. \label{eq:bracket-disjoint-circ}
\end{eqnarray}  
Clearly, the normalization  depends upon the way the components are oriented, but the Kauffman bracket itself does not depend upon a choice of orientation.  The new normalization factor is defined as follows.  Let $L=K_1\cup  \cdots \cup K_\ell$ be a link with $\ell$ components and let $D$ be a virtual diagram of $L$.  A {\em self-crossing} in $D$ is a classical crossing in which both under- and over-arcs of the crossing are from the same component. For a specific component $K_i$, the usual signs for each self-crossing of $K_i$ are the same for either orientation of $K_i$.   Let $s_+$ and $s_-$ be the total number of positive and negative self-crossings of a link $L$. 

The first Reidemeister move is covered by the self-crossing data, and the third Reidemeister move does not play a role in any normalization.  For the second Reidemeister move, the case of using the move on a single component $K_i$ is taken care of by the self-crossing data.  The final case is a Reidemeister two move for strands from two different components of a  link. Therefore, the remaining type of crossing to consider is a mixed-crossing:  Let $m$ be the total number of mixed-crossings of a diagram $D$.  Note that we do not assign a positive or negative sign to these crossings.

Applying the Kauffman bracket to two strands with one over the other, we see that
$$\langle \raisebox{-0.25\height}{\includegraphics[width=0.5cm]{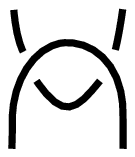}}\rangle =
-q \langle  \raisebox{-0.50\height}{\includegraphics[width=0.5cm]{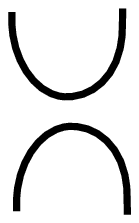}}\rangle.$$  
In other words, we can pull the strands apart at the cost of multiplying by an overall factor of $-q^{-1}$ (which partially motivates the usual normalization). Note that 
\begin{equation}
\label{eq:component-norm-factor}
(-1)^{-\frac12 m} q^{-\frac12 m},
\end{equation} 
yields the proper correction factor to ensure that the polynomial is invariant under a Reidemeister 2 move.   Further, note that the change in $-\frac12 m$ is equal to the change in $n_+-2n_-$ under the move because, for any orientation of the strands, there will always be one positive crossing and one negative crossing.

Thus we may define the {\em unoriented Jones polynomial} of a classical link $L$ by
\begin{equation}
\tilde{J}_L(q) = (-1)^{-n_-} q^{\left(s_+ - 2s_- - \frac12 m \right)} \langle L \rangle. \label{eq:Unoriented-Jones-Poly}
\end{equation}
Based upon  Expression~(\ref{eq:component-norm-factor}), one should expect to see $-s_- - \frac12 m$ as the exponent of $-1$ instead of $-n_-$.  However, as we will elaborate in \Cref{unoriented-jones-poly-for-virt-links}, 
\begin{equation}
n_- = - \lambda + s_- + \frac12 m, \label{eq:n_-}
\end{equation}
where $\lambda =\sum_{i<j} Lk(K_i, K_j)$.  To compute $\lambda$ requires a choice of orientation.  This sum of linking numbers changes by an even number when the orientation of $L$ is changed.  Hence, $n_-$ is made up of two terms that  do not change under a change of orientation and one term that changes by an even number. 

The discussion above immediately implies 

\begin{thm}
The unoriented Jones polynomial $\tilde{J}_L$ of a classical link $L$ is an invariant of the link.
\end{thm}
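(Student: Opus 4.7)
The plan is first to check that $\tilde{J}_L$ is well-defined independently of the orientation used to compute $n_-$, and then to verify invariance under each of the three Reidemeister moves. For well-definedness, I would invoke the identity $n_- = -\lambda + s_- + \frac12 m$ from Equation~(\ref{eq:n_-}) and note that the self-crossing counts $s_\pm$ and the mixed-crossing count $m$ are manifestly orientation-independent, so only the factor $(-1)^\lambda$ needs scrutiny. Reversing the orientation of a single component $K_i$ flips the sign of each $Lk(K_i,K_j)$ for $j\neq i$, altering $\lambda$ by the even integer $-2\sum_{j\neq i}Lk(K_i,K_j)$, so $(-1)^\lambda$ is preserved. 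Since $\langle L\rangle$ and the $q$-exponent $s_+-2s_--\frac12 m$ involve only orientation-independent data, $\tilde{J}_L$ is well-defined. (It is also worth recording here that for classical links $m$ is always even, since $m_+-m_- = 2\lambda$ forces $m_+$ and $m_-$ to have the same parity, so the $q$-exponent lies in $\mathbb{Z}$.)

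R3 invariance is immediate, because the Kauffman bracket is R3-invariant and each of $s_\pm$, $m$, and $n_-$ is unchanged under R3. For a mixed R2 move, one positive and one negative mixed crossing are introduced, so $m$ jumps by $2$ and (for either orientation of the two strands) $n_-$ jumps by $1$, while $s_\pm$ are fixed. The bracket identity $\langle\,\text{two mixed crossings}\,\rangle = -q\,\langle\,\text{two parallel strands}\,\rangle$ established in the discussion above contributes a factor $-q$, the change in $q^{-\frac12 m}$ contributes $q^{-1}$, and the change in $(-1)^{-n_-}$ contributes $-1$; their product is $1$. For R1, I would expand the skein relation (\ref{eq:bracket-crossing}) on a single curl and apply the circle rule (\ref{eq:bracket-disjoint-circ}) to obtain $\langle\,\text{positive curl}\,\rangle = q^{-1}\langle\,\text{strand}\,\rangle$ and $\langle\,\text{negative curl}\,\rangle = -q^{2}\langle\,\text{strand}\,\rangle$. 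A positive curl increments $s_+$ by $1$ and leaves $n_-$ fixed, yielding a correction $q\cdot q^{-1}=1$; a negative curl increments both $s_-$ and $n_-$ by $1$, yielding $(-1)\cdot q^{-2}\cdot(-q^{2})=1$. Self-R2 moves reduce to the mixed-R2 computation, since the two new crossings still consist of one positive and one negative crossing with respect to the single component involved.

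The main obstacle is the well-definedness step rather than the Reidemeister checks: the defining formula uses $n_-$, which requires an orientation, so one must commit to the decomposition~(\ref{eq:n_-}) and the parity argument for $\lambda$ before any Reidemeister calculation is even meaningful. Once that is in place, each Reidemeister check reduces to routine bookkeeping, reflecting that the exponent $s_+-2s_--\frac12 m$ was engineered precisely so that the bracket correction factors from each move cancel.
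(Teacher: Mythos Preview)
Your proposal is correct and follows essentially the same approach as the paper: the paper's proof is literally the one-line ``The discussion above immediately implies,'' and your argument simply fleshes out that discussion---using the decomposition $n_- = -\lambda + s_- + \tfrac{1}{2}m$ for orientation-independence and then checking each Reidemeister move against the normalization. One minor quibble: in your last sentence, the self-R2 case does not literally ``reduce to'' the mixed-R2 computation, since $s_\pm$ change rather than $m$; the net effect on the exponents happens to coincide, but it would be cleaner to record that $s_+ - 2s_-$ drops by $1$ there just as $-\tfrac{1}{2}m$ does in the mixed case.
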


Next we extend the definition of the unoriented Jones polynomial to virtual links.  There are a number of pitfalls.  The first is that $\frac12 m$ and $Lk(K_i, K_j)$ can be half integers.  This means that the polynomial may have imaginary-valued coefficients.  Since we wish to preserve (as much as possible) the well known fact that evaluating the Jones polynomial at $1$ is $2^\ell$, this presents a problem.  Worse yet, if the orientation of $K_i$ (or $K_j$) is reversed, then $Lk(K_i, K_j)$ changes by an odd number if the total number of mixed-crossings between $K_i$ and $K_j$ is odd.  Hence,  the term $\lambda$ needs to be modified to compensate for this issue. We tried many potential modifications that were orientation invariant, however, these modifications could not be normalized so that the well-known fact ($2^\ell$) continued to hold, even for classical links.  The solution involves what we call the ``core'' of the virtual link, and it turns out that the core has far wider implications for virtual link theory.

\section{Unoriented Jones Polynomial for Virtual Links}
\label{unoriented-jones-poly-for-virt-links}

Before defining the  core of a virtual link, we begin this section by recalling some basic facts about virtual link theory.

\subsection{Virtual Knot Theory}\label{Subsec:vkt} Classical knot theory is the study of embeddings of disjoint unions of \( S^1 \) in \( S^3 \). Virtual knot theory, as introduced by Kauffman \cite{Kauffman1998}, is also the study of disjoint unions of $S^1$, but in a different ambient space:  \( \Sigma_g \times [ 0 , 1 ] \), where \( \Sigma_g \) denotes a closed orientable surface of genus \( g \).  Unlike links in many other \(3\)-manifolds, virtual links have a diagrammatic theory, akin to to that of classical links.  We begin by recalling some of the relevant facts about the theory, and refer the reader to \cite{Kauffman1998,Kauffman2020, ManturovVirtualKnots} for a more thorough treatment.

A virtual link diagram is a $4-$valent planar graph in which the vertices are decorated with classical crossings or \emph{virtual crossings}, denoted by \raisebox{-3pt}{\includegraphics[scale=0.35]{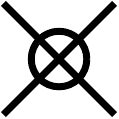}}.  Examples of virtual link diagrams are given in \Cref{Fig:virtualHopf} and \Cref{fig:borromean-rings}.

Two classical knots are equivalent if and only if their diagrams are related by a finite sequence of the classical Reidemeister moves.  Two virtual knots are equivalent if and only if their diagrams are related by a finite sequence of the  Reidemeister moves together with the virtual Reidemeister moves. The moves are shown in \Cref{Fig:vrms}.  One may think of the virtual link itself as an equivalence class of virtual link diagrams, each member of which is related to the rest by a finite sequence of classical and virtual Reidemeister moves.

\begin{figure}[H]
\includegraphics[scale=0.75]{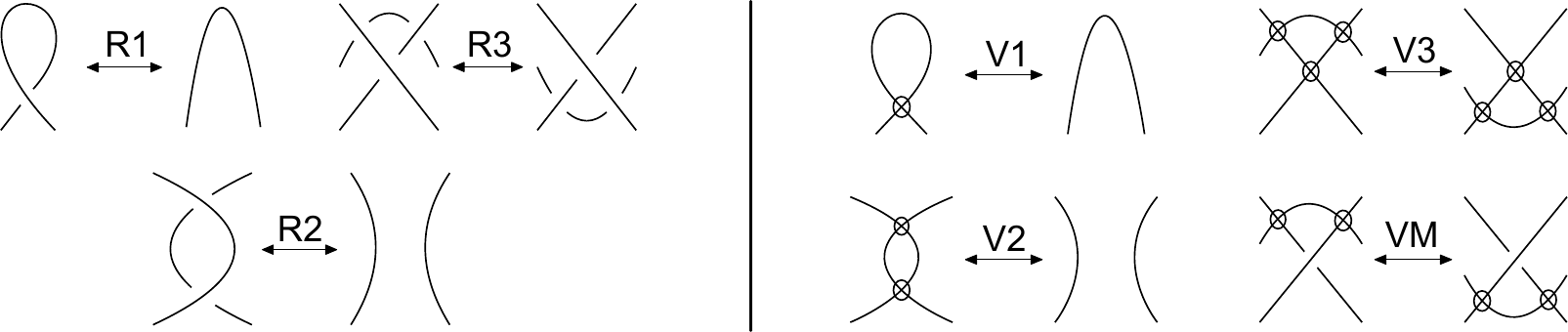}
\caption{The classical and virtual Reidemeister moves.}
\label{Fig:vrms}
\end{figure}

A classical link diagram is simply a virtual link diagram without virtual crossings. Thus, classical knot theory is a proper subset of virtual knot theory.  For an in-depth treatment of the diagrammatic theory of virtual links see \cite{Kauffman1998}.

\subsection{The Even Core of a Virtual Link}\label{Subsec:EvenCore}
For classical links, the Jordan Curve Theorem implies that each component of a link diagram intersects every other component of the diagram in an even number of crossings.  However, virtual crossings are not genuine crossings, which allows one to define various parities for virtual links.

\begin{defn}[Component-to-Component Parity] Let $D$ be a diagram of a virtual link $L$ with components $K_1,...,K_\ell$.  The {\em component-to-component parity}, denoted $\pi(K_i,K_j)$, is $0$ if there are an even number of mixed crossings between components $K_i$ and $K_j$ and $1$ otherwise.
\end{defn}

\begin{defn}[Component Parity]
\label{def:ComponentParity}
Let $D$ be a diagram of a virtual link $L$ with components $K_1,...,K_\ell$.  Define the parity of $K_i$, denoted $\pi(K_i)$, to be the number of mixed crossings of $D$ involving component $K_i$, modulo 2.  
\end{defn}

Note that the number of mixed crossings of a component is equal to the number of virtual crossings modulo two.

\begin{defn}[Link Parity]
\label{def:LinkParity}
Let $D$ be a diagram of a virtual link $L$ with components $K_1,...,K_\ell$.  The link $L$ is called {\em even} if all components of $L$ are even, i.e., $\pi(K_i)=0$ for all $i$, and called {\em odd} if there exists an odd component. The parity of $L$, denoted $\pi(L)$, is $\pi(L)=0$ if $L$ is even and $\pi(L)=1$ if $L$ is odd.
\end{defn}

Note that the virtual Reidemeister moves have no effect on these parities, and the classical Reidemeister moves clearly leave them unchanged, and hence we obtain:

\begin{lem}
\label{lem:parity}
The link parity, each component parity, and each component-to-component parity of a link are all virtual link invariants.
\end{lem}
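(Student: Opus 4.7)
The plan is to observe that all three notions of parity are defined purely in terms of counts (modulo $2$) of mixed classical crossings between specified components, and then to verify that every Reidemeister move---classical and virtual---leaves these pairwise counts invariant modulo $2$. The virtual Reidemeister moves involve only virtual crossings, so they do not alter any classical crossing whatsoever; in particular, for every pair of components $K_i,K_j$ the number of mixed classical crossings between $K_i$ and $K_j$ is unchanged. The mixed virtual/classical move (the move in which a strand carrying a classical crossing is pushed past a virtual crossing) replaces a classical crossing by a classical crossing between the exact same pair of strands, and hence preserves each pairwise count exactly.

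The remaining work is a short case analysis of the three classical Reidemeister moves. First, $R_1$ introduces or removes a self-crossing on a single strand, which contributes nothing to any mixed-crossing count. Second, $R_2$ either involves two arcs from the same component (again a pure self-crossing situation, with no mixed crossings created or destroyed) or it involves two arcs from distinct components $K_i$ and $K_j$, in which case exactly two mixed crossings between $K_i$ and $K_j$ are added or removed simultaneously; the pairwise count therefore changes by $\pm 2$, and its class mod $2$ is unchanged. Third, $R_3$ permutes the three crossings among the three strands but each resulting crossing occurs between the same pair of strands (hence the same pair of components) as one of the original three, so the number of mixed crossings between any pair of components is preserved on the nose.

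With pairwise invariance established, the three statements of the lemma follow immediately: the component-to-component parity $\pi(K_i,K_j)$ is the pairwise count mod $2$ itself; the component parity $\pi(K_i)$ is $\sum_{j\neq i}\pi(K_i,K_j)$ mod $2$; and the link parity $\pi(L)$ is determined by whether any $\pi(K_i)$ is nonzero. Since invariance of each $\pi(K_i,K_j)$ under all Reidemeister moves is stable under taking sums mod $2$ and under the ``exists a nonzero component'' predicate, the three quantities descend to well-defined invariants of the virtual link $L$.

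I do not anticipate any real obstacle here; the only point that warrants care is the $R_3$ verification, where one must check that the rearrangement of the three strands does not change which pair of components any given crossing lies between. This is a direct consequence of the fact that $R_3$ moves strands rather than reconnects them, so the identity of the two strands involved in each of the three crossings is preserved through the move.
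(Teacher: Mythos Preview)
Your proof is correct and follows the same approach as the paper, which simply observes in one sentence that the virtual Reidemeister moves do not affect these parities and the classical moves clearly leave them unchanged. Your version is a fuller case analysis spelling out precisely why each move preserves the pairwise mixed-crossing counts mod $2$, which is exactly the content behind the paper's terse remark.
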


It is well-known that even virtual links form a subset of virtual links for which it is often easier to define invariants (cf. \cite{Rushworth2, Kamada, MWY}).  However, odd virtual links present certain challenges.  Some of these challenges may be overcome by choosing an invariant even sublink.

\begin{defn}
\label{defn:EvenCore}
Given a diagram of a virtual link $L = K_1\cup K_2 \cup \ldots \cup K_\ell$ we obtain a sub-link $L_1$ by deleting all the odd components of $L$.  The resulting sub-link $L_1$ may be even or odd.  If it is even, stop. Otherwise, repeat the procedure on $L_1$ to get $L_2$, and continue until an even sub-link, $L_k$, is obtained ($L_k$ may be the empty link, which is even).  Call the even sub-link $L_k$  \emph{the core} of $L$ and denote it by $C$. 
\end{defn}

Note that after deleting the odd components of $L_i$ to get $L_{i+1}$, it is possible that components of $L_{i+1}$ that were even in $L_i$ become odd in $L_{i+1}$.  \Cref{lem:parity} still applies to the sub-link $L_{i+1}$ thought of as a link by itself.  Therefore the deletion process is unique: $L_1$ will always be even or odd as its own link, and the odd components of $L_1$ that are deleted to get $L_2$ will always be the same  components, and so on.  Thus, we immediately obtain:

\begin{lem}
\label{lem:EvenCoreInvariant}
Any invariant of the even core of a virtual link $L$ is an invariant of $L$.  
\end{lem}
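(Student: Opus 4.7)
The plan is to show that the core $C$ is itself a virtual link invariant of $L$ --- that is, independent of the diagram used to compute it --- whereupon any invariant $\Psi$ of even virtual links produces a well-defined value $\Psi(C)$ depending only on the equivalence class of $L$. The argument reduces to a single combinatorial fact combined with an inductive use of \Cref{lem:parity}.

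First I would establish that sub-link restriction descends through Reidemeister equivalence. Specifically: if $D$ and $D'$ are diagrams of the same virtual link $L$, and $S$ is any subset of its components, then the diagrams obtained by erasing the strands of $S$ from $D$ and from $D'$ represent the same virtual link $L\setminus S$. This reduces to a case-check on the Reidemeister moves. A move whose strands all belong to retained components survives unchanged on the restriction; a move whose strands all lie in $S$ becomes vacuous; and a ``mixed'' move --- an R2 between a retained and a deleted strand, an R3 with at least one deleted strand, or any of the virtual analogs --- collapses to a trivial change on the restriction, because the combinatorial picture of the retained arcs is identical before and after the move. Thus any Reidemeister sequence $D\to D'$ descends to a (possibly trivial-at-each-step) Reidemeister sequence between the restrictions, proving that $L\setminus S$ is a well-defined virtual link.

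With this in hand, the rest is immediate. By \Cref{lem:parity} the set of odd components of $L$ is an invariantly-defined subset of components, so deletion yields a well-defined virtual link $L_1$. Viewing $L_1$ as a virtual link on its own, \Cref{lem:parity} again furnishes an invariant notion of parity for its components, so the analogous deletion yields a well-defined $L_2$, and inductively each $L_k$ of \Cref{defn:EvenCore} is well-defined. The process terminates in at most $\ell$ steps since each nontrivial iteration strictly decreases the component count, and the terminal even sub-link $C$ is therefore an invariant of $L$. Evaluating $\Psi$ at $C$ then yields an invariant of $L$.

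The only genuine subtlety is the verification that sub-link deletion descends through \emph{virtual} as well as classical Reidemeister moves; this is where I expect the mild obstacle, since one must be careful that erasing a strand does not force any rerouting of retained arcs. Because deletion simply removes strands without any surgery on the remaining diagram, the case-analysis indicated above is routine and the descent goes through without incident.
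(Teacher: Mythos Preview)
Your proposal is correct and follows essentially the same approach as the paper: both argue that the core $C$ is a well-defined virtual link invariant by applying \Cref{lem:parity} inductively at each stage of the deletion process, so that evaluating any invariant on $C$ yields an invariant of $L$. The paper treats the fact that sub-link deletion descends through (virtual and classical) Reidemeister moves as implicit, whereas you spell out the case-check explicitly; this is a harmless difference in level of detail rather than a difference in method.
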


Invariants of odd links calculated from the even core leave out much of the information about the link.  In order to recapture some of that information it is helpful to look at the complement of the even core, which we call the \emph{mantle} of the virtual link. 

\begin{defn}
\label{defn:Mantle}
The {\em mantle} of a link $L$ is the sub-link $M$ given by the complement of the core, i.e., $M=L\setminus C$.  
\end{defn}

Note that the mantle may be either an even or odd link in its own right, and possesses its own even core.  Hence, we may repeat the process described above.  Given a virtual link $L$, determine its core $C_1$ and mantle, $M_1$ ($L=M_1 \cup C_1$) using  \Cref{defn:EvenCore}. Next, determine the core of $M_1$ and denote it by $C_2$ using  \Cref{defn:EvenCore}.  This process also determines a new mantle, $M_2$.  At this stage, $L = C_1 \cup C_2 \cup M_2$.  Repeat this process until $M_n$ is either empty or has an empty core (if $M_n$ is non-empty and has empty core, it must contain only odd components).  Thus, one obtains a decomposition of the link:  $L = C_1 \cup \ldots \cup C_n \cup M_n$.  We call this the \emph{multi-core decomposition} of the link.  Repeated applications of \Cref{lem:EvenCoreInvariant} yield the following theorem.

\begin{thm}
\label{thm:CoreMantleInvariant}
Any invariant of even links, $\Psi$, induces a tuple of invariants $(\Psi(C_1),\ldots, \Psi(C_n))$ for the multi-core decomposition of a virtual link $L$, and the tuple itself is an invariant of $L$. 
\end{thm}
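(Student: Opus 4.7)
The plan is to reduce the theorem to the well-definedness of the multi-core decomposition itself: once we know that the ordered sequence of even sub-links $C_1, C_2, \ldots, C_n$ is determined (up to virtual equivalence) by the equivalence class of $L$, applying an even-link invariant $\Psi$ componentwise immediately produces an ordered tuple $(\Psi(C_1), \ldots, \Psi(C_n))$ that depends only on $L$. So the content lies in proving invariance of each $C_i$ under the classical and virtual Reidemeister moves.

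The first step is to show that $C_1$ is a virtual link invariant. Suppose $D$ and $D'$ are diagrams of $L$ related by a single Reidemeister move. By \Cref{lem:parity} the parities $\pi(K_i)$ of corresponding components agree in $D$ and $D'$, so the set of odd components is the same in both diagrams. Deleting these components simultaneously from $D$ and $D'$ yields diagrams $D_1$ and $D_1'$; the Reidemeister move either takes place entirely within a retained component (in which case it persists in $D_1, D_1'$), entirely within a deleted component or between deleted components (in which case it disappears from both), or between a retained and a deleted component — but any such move is between two components of $L$, and is hence a mixed-crossing move; such a move involves two strands of different components and can only be an R2 or R3 type move, which after deletion of either of the involved components simply disappears. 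Thus $D_1$ and $D_1'$ are related by a (possibly trivial) Reidemeister move, so $L_1 = L \setminus \{\text{odd components of } L\}$ is a well-defined virtual link. Iterating, the same argument applies to $L_1$ in place of $L$ (using $\pi$ computed in $L_1$, which is again an invariant of $L_1$ by \Cref{lem:parity}), and so $L_2, L_3, \ldots$ are each well-defined virtual links, as is the terminal even sub-link $C = C_1$.

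Next, I would iterate the argument to handle the whole multi-core decomposition. Since $M_1 = L \setminus C_1$ is obtained from $L$ by removing the components that survived the core extraction, and the surviving components are determined by $L$ alone, $M_1$ is itself a well-defined virtual link. Applying \Cref{lem:EvenCoreInvariant} (extended to the situation above) to $M_1$ gives that $C_2$ (the core of $M_1$) is a virtual link invariant of $M_1$, hence of $L$. A straightforward induction on $n$ shows that each $C_i$ and each $M_i$ in the decomposition $L = C_1 \cup \cdots \cup C_n \cup M_n$ is a virtual link invariant of $L$, and the process terminates in finitely many steps because each $M_i$ has strictly fewer components than $M_{i-1}$ (we only stop when $M_n$ is empty or has no even components, and at each prior stage at least one component is extracted to form $C_i$).

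Finally, since every $C_i$ is an even virtual link that is determined up to equivalence by $L$, applying $\Psi$ yields $\Psi(C_i)$ depending only on $L$. The ordered tuple $(\Psi(C_1), \ldots, \Psi(C_n))$ is therefore an invariant of $L$. The main obstacle is the intermediate step that deletion of components commutes with the Reidemeister moves in a well-defined way; the subtlety is that a move mixing a deleted component with a retained one must not inadvertently alter the retained diagram, and this is precisely where the observation that such a move can only be an R2 or R3 between distinct components (never an R1, which is a self-crossing move) makes the deletion well-behaved.
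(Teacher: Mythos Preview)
Your proof is correct and follows essentially the same approach as the paper: establish that the core (and iteratively each $C_i$ and $M_i$) is determined up to equivalence by $L$ via the parity invariance of \Cref{lem:parity}, then apply $\Psi$ componentwise. The paper's own argument is a one-line appeal to repeated applications of \Cref{lem:EvenCoreInvariant}; your version simply spells out in more detail why deleting a fixed set of components commutes with each Reidemeister move.
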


In particular, this theorem implies that any invariant of even links immediately generalizes to a new invariant of {\em all} virtual links---not just even links.  For example, the papers \cite{MWY} and \cite{Kamada} generalize the odd writhe of a virtual knot  \cite{Kauffman2004} to even virtual links.  In \cite{Kamada}, a virtual orientation is described where the orientation of a component changes direction in a diagram $D$ at every virtual crossing (which is why the link must be even).  Using this orientation, the sum of the signs of classical crossings  between $K_i$ and $K_j$ whose over arc is $K_i$ is denoted $\Lambda_D(i,j)$. Clearly, this number depends upon the order: $\Lambda_D(i,j) \not= \Lambda_D(j,i)$ in general.

\begin{cor}
\label{cor:virtualOrientationLinking}[cf. Theorem 14 of \cite{Kamada}, see also \cite{MWY}]
Let $L = K_1 \cup K_2 \cup \cdots \cup K_\ell$ be an ordered virtual link.  Every core $C_r$ inherits an ordering from the ordering of $L$.  For every core $C_r$ of $L$, and any pair of components, $K_i$ and $K_j$ in $C_r$, the number $|\Lambda_{C_r}(i,j)|$ is an invariant of the ordered unoriented virtual link $L$.
\end{cor}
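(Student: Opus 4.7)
The plan is to deduce the corollary by combining two inputs: Kamada's Theorem 14 of \cite{Kamada} (which gives $\Lambda_D(i,j)$ as an invariant of an ordered, oriented, even virtual link $D$) and Theorem~\ref{thm:CoreMantleInvariant}, which promotes any invariant of ordered unoriented even virtual links to an invariant of an arbitrary virtual link $L$ via its multi-core decomposition. Each core $C_r$ is even by its defining property (\Cref{defn:EvenCore}), and the ordering on $L$ restricts canonically to an ordering on $C_r$ by keeping the relative order among the surviving components. So it suffices to promote $\Lambda$ to an invariant of the ordered \emph{unoriented} even virtual link, after which the corollary drops out from the multi-core machinery.

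The central verification is then the behavior of $\Lambda_{C_r}(i,j)$ under reversal of the orientation of a single component of $C_r$. Because $C_r$ is even, any choice of orientations of its components extends unambiguously to a virtual orientation (one that reverses at each virtual crossing and closes up consistently after going around). Reversing the orientation of $K_i$ reverses its virtual orientation everywhere, and consequently the local sign of every classical mixed crossing whose over-arc lies on $K_i$ flips; summing those signs over crossings with $K_j$ as under-arc changes the sign of $\Lambda_{C_r}(i,j)$. Reversal of $K_j$ has the same effect, and reversal of any component of $C_r$ distinct from both $K_i$ and $K_j$ leaves every summand sign untouched. In every case $|\Lambda_{C_r}(i,j)|$ is unchanged, so it is a well-defined invariant of the ordered unoriented even virtual link $C_r$.

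With this in hand, the map $\Psi(C_r) := |\Lambda_{C_r}(i,j)|$ (interpreted as zero when $K_i$ or $K_j$ is not present in $C_r$) is an invariant of ordered unoriented even virtual links, so Theorem~\ref{thm:CoreMantleInvariant} immediately yields the desired invariant of the ordered unoriented virtual link $L$. The main obstacle I anticipate is the careful bookkeeping of sign changes under orientation reversal as they interact with Kamada's virtual-orientation convention; evenness of $C_r$ is precisely what makes the virtual orientation globally well-defined and forces the reversal to flip every relevant crossing sign simultaneously, so this step is delicate but not deep.
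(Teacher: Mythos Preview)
Your proposal is correct and follows essentially the same approach as the paper: the corollary is stated without an explicit proof and is meant as an immediate application of Theorem~\ref{thm:CoreMantleInvariant} to the even-link invariant $|\Lambda_D(i,j)|$ coming from Kamada's Theorem~14. The only difference is that the paper treats the orientation-independence of $|\Lambda_D(i,j)|$ as already part of Kamada's cited result, whereas you verify it explicitly via the sign-reversal argument; this extra verification is sound and harmless.
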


Another generalization of the odd writhe to even links was given in \cite{Rushworth2}.  In that paper, an even link is equivalent to the link being 2-colorable.  The definition of 2-color writhe, $J^2(D)$, of an even link diagram $D$ depends upon a special parity function on each crossing that satisfies the parity axioms (briefly mentioned below).  For a given 2-coloring of an oriented link, define a quantity for that coloring as the sum of classical crossing signs of only the odd crossings (where odd is defined by that parity function).  The 2-color writhe of the link, $J^2(L)$, is then a tuple of these numbers---one for each 2-coloring of a diagram $D$ of $L$.  This tuple is defined up to permutations of the entries.

\begin{cor}
\label{cor:2colorwrithe}
The tuple consisting of the 2-color writhe of each core, $\left(J^2(C_1),\ldots, J^2(C_n)\right)$, is an invariant of the oriented virtual link $L$.
\end{cor}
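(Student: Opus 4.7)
The plan is to realize this corollary as a direct instance of Theorem \ref{thm:CoreMantleInvariant}, applied with $\Psi = J^2$. The theorem takes any invariant of even virtual links and packages the values on the cores $C_1, \ldots, C_n$ of the multi-core decomposition into a tuple that is itself a virtual link invariant. So the work reduces to verifying that $J^2$ qualifies as an input.

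First I would recall from \cite{Rushworth2} that the 2-color writhe is well-defined precisely for 2-colorable (equivalently, even) virtual link diagrams, and that the tuple $J^2(D)$ (indexed, up to permutation, by 2-colorings of $D$) is an invariant of the oriented even virtual link represented by $D$. The two ingredients are that the parity function used to single out the odd crossings satisfies the parity axioms, so the count of signs over odd crossings is preserved under Reidemeister moves, and that the set of 2-colorings transforms compatibly under those moves, so the resulting unordered tuple is invariant.

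Next I would observe that by construction of the multi-core decomposition in \Cref{defn:EvenCore} and the definition of $C_r$ as the core of the mantle $M_{r-1}$, every $C_r$ is an even sub-link of $L$. Hence $J^2(C_r)$ is defined for each $r = 1, \ldots, n$, and by the invariance statement above it is an invariant of the oriented virtual link $C_r$.

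Finally I would invoke \Cref{thm:CoreMantleInvariant} with $\Psi = J^2$ to conclude that $\bigl(J^2(C_1), \ldots, J^2(C_n)\bigr)$ is an invariant of the oriented virtual link $L$. I do not anticipate a genuine obstacle here, since all the real content is in Rushworth's invariance of $J^2$ on even links and in the already-proven \Cref{thm:CoreMantleInvariant}; the only mild subtlety worth flagging is that $J^2(C_r)$ is itself a tuple (indexed by 2-colorings of $C_r$), so the object asserted to be invariant is a tuple of tuples, and one should be careful that the ambiguity in the ordering within each $J^2(C_r)$ does not affect the statement --- this is handled exactly as in \cite{Rushworth2}, where $J^2$ is defined only up to permutation of its entries.
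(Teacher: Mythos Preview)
Your proposal is correct and matches the paper's intended argument: the corollary is stated immediately after \Cref{thm:CoreMantleInvariant} as a direct application of that theorem with $\Psi = J^2$, and the paper offers no further proof. Your care in checking that $J^2$ is an invariant of even (equivalently 2-colorable) oriented virtual links and in noting the tuple-of-tuples subtlety is appropriate and goes slightly beyond what the paper spells out.
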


Rushworth shows that for a virtual knot, $J^2(K)$ is the odd writhe of $K$ (see Proposition 3.4 of \cite{Rushworth2}).

Sometimes these tuples of invariants can be profitably added together to get an overall invariant of the link.  We want more, however.  For an upgraded definition of $\lambda$ for all virtual links (not just even links), we wish to define ``linking numbers'' for all components, not just components in each of the cores.  To do so, we introduce a parity function defined from pairs of components of $L$.  This parity function is defined on components, but could be described as a parity function on crossings (cf. \cite{ManturovParity},  and \cite{ManturovProjection}, \cite{Rushworth2}), as we will see next.

\subsection{A Parity Function}
It will be useful to define a parity function on pairs of components of a virtual link.  Let $L =  K_1 \cup \ldots \cup K_\ell$ be a virtual link with multi-core decomposition, $L = C_1 \cup C_2 \cup \ldots \cup C_n \cup M_n$.  We define a {\em component parity function} on pairs of components as follows:
\begin{equation}
p(K_i, K_j) = \left\{
        \begin{array}{ll}
            0 & \quad K_i, K_j\in C_r \,\, \text{for some} \, r \\
            1 & \quad \text{otherwise.}
        \end{array}
    \right.
\end{equation}

This component parity function descends to a parity function on crossings, as described in  \cite{ManturovParity} and \cite{Rushworth2}.   If $c$ is a crossing between component $K_i$ and $K_j$, then we assign the parity, $p(K_i,K_j)$, to $c$.  Any self-crossing is assigned a parity of $0$.  Axioms 0 and 1 (cf. \cite{Rushworth2}) are clearly satisfied.  Because all crossings between $K_i$ and $K_j$ will have the same parity, Axiom 2 is clearly satisfied.  For a Reidemeister 3 move, we either have all three components in the same core, $C_r$, two in the same core, and one is not, or, no two of the strands are in the same core.  These correspond to the three allowable cases of Axiom 3.  We could just work with the parity function on crossings, but find it convenient to work with the component parity function at the level of component-to-component for reasons that will become apparent below.

\subsection{The Unoriented Jones Polynomial of a Virtual Link}\label{Subsec:unorientVJones}

We wish now to generalize the unoriented Jones polynomial (\Cref{eq:Unoriented-Jones-Poly}) for classical links to an invariant of unoriented virtual links. 
In fact, the same definition together with \Cref{eq:n_-} works for any even virtual link.  Here is why: Suppose the orientation of $K_i$ is reversed.  In the classical case, the number of mixed crossings between $K_i$ and each other component will be even.  For an even virtual link, it is possible that the number of mixed crossings between $K_i$ and $K_j$ is odd for some $j$ (cf. \Cref{fig:borromean-rings}).  In that case, reversing the orientation on $K_i$ will change $Lk(K_i, K_j)$ by an odd number.  However, in an even virtual link, $\pi (K_i)=0$ for all $i$.  Hence, there must be an even number of components that intersect $K_i$ in an odd number of mixed crossings.  Thus, the overall parity of the exponent on $(-1)$ is unchanged by the orientation swap.

For an odd virtual link, the unoriented Jones polynomial as defined for even/classical links need not be an orientation invariant, as the following example shows. 

\begin{ex}
\label{ex:VirtualHopfs}
Let $L_0$ and $L_1$ be the oriented virtual Hopf links shown on the left and right, respectively in \Cref{Fig:virtualHopf}.  Observe that  unoriented Jones polynomial defined for classical links is not orientation invariant, because $\lambda- s_- -\frac12m$ for $L_0$ is $-1$ while for $L_1$ it is $0$. 
\end{ex}

\begin{figure}[H]
\includegraphics[scale=.6]{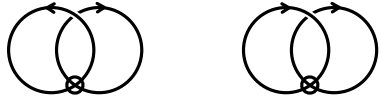}
\caption{Virtual Hopf links with two different orientations.}
\label{Fig:virtualHopf}
\end{figure}

Thus, we need to extend the  unoriented Jones polynomial defined for classical/even links to odd links.  This amounts to redefining the term $\lambda$.  We do this next.  Consider a diagram of an oriented virtual link $L = K_1 \cup \ldots \cup K_\ell$.

We first define a \emph{modified linking number}:
$$\widetilde{Lk}(K_i, K_j) \coloneqq (-1)^{p(K_i,K_j)\cdot \left(Lk(K_i,K_j) + \frac12 \pi(K_i,K_j)\right)} Lk(K_i,K_j),$$
where $p(K_i,K_j)$ is the component parity function and $\pi(K_i,K_j)$ is the component-to-component parity.  Suppose $L$ has  multi-core decomposition $L = C_1 \cup  \ldots \cup C_n \cup M_n$.  The modified linking number is, up to sign, the ordinary linking number, and if $K_i$ and $K_j$ belong to the same core, $C_r$, it has the same sign as the ordinary linking number.  If $K_i$ and $K_j$ belong to different cores, or if at least one of them belongs to the mantle, $M_n$, then the modified linking number may have the opposite sign as the ordinary linking number.  

\begin{lem}
\label{lem:modLinkingNumber}
The modified linking number, $\widetilde{Lk}(K_i, K_j)$, is an element of $\frac12 \mathbb{Z}$ and is an oriented virtual link invariant.
\end{lem}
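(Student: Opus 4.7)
The plan is to prove two things: first, that the defining expression is a well-defined element of $\frac12\mathbb{Z}$ (in particular, that the exponent on $-1$ is an integer, so the sign factor is really $\pm 1$), and second, that the resulting quantity is unchanged by each Reidemeister move.

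For the first point, I would case-split on the value of $p(K_i, K_j)$. If $p(K_i,K_j) = 0$, the exponent is $0$ and $\widetilde{Lk}(K_i,K_j) = Lk(K_i,K_j) \in \frac12\mathbb{Z}$ trivially. If $p(K_i,K_j) = 1$, the exponent is $Lk(K_i,K_j) + \frac12\pi(K_i,K_j)$, and I would use the key observation that $Lk(K_i,K_j)$ is half the sum of signs of mixed crossings between $K_i$ and $K_j$, so its fractional part is $\frac12$ precisely when the number of such mixed crossings is odd, i.e., precisely when $\pi(K_i,K_j) = 1$. In that case $\frac12\pi(K_i,K_j) = \frac12$ corrects the fractional part, making the exponent an integer; and when $\pi(K_i,K_j) = 0$, the linking number is already an integer. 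Either way, the exponent is an integer, so $\widetilde{Lk}(K_i,K_j) = \pm Lk(K_i,K_j) \in \frac12\mathbb{Z}$.

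For invariance, each of the three ingredients $Lk(K_i,K_j)$, $\pi(K_i,K_j)$, and $p(K_i,K_j)$ must be shown to be preserved under the virtual and classical Reidemeister moves. The virtual Reidemeister moves do not alter any classical crossing, so $Lk$ and $\pi$ are unchanged, and by \Cref{lem:parity} so is $p$. Classical R1 only creates self-crossings and therefore affects neither the pairwise linking number nor the component-to-component parity. Classical R2 either introduces two self-crossings of opposite signs, or two mixed crossings between $K_i$ and $K_j$ of opposite signs; in the latter case $Lk(K_i,K_j)$ is unchanged and $\pi(K_i,K_j)$ shifts by $2 \equiv 0 \pmod 2$. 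Classical R3 permutes crossings without changing their signs, so $Lk$ and $\pi$ are unchanged. The remaining point is that $p(K_i,K_j)$, defined through the multi-core decomposition, is a genuine invariant; this follows from \Cref{lem:parity} applied inductively, since the components deleted at each stage of \Cref{defn:EvenCore} are determined by parity data that is itself invariant, so the cores $C_r$ (and hence which pair lie in a common core) are determined by the equivalence class of $L$.

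The main obstacle I anticipate is bookkeeping in the well-definedness step, namely keeping the correspondence between the parity of the number of mixed crossings and the fractional part of $Lk$ straight so that the $\frac12\pi$ term exactly cancels any half-integer part of the exponent. Everything else reduces to invocations of \Cref{lem:parity} together with the standard invariance of the linking number, so I do not expect the Reidemeister-move verification to present any serious difficulty.
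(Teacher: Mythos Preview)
Your proposal is correct and follows essentially the same route as the paper. The paper case-splits on whether $K_i,K_j$ lie in the same core (equivalently on $p(K_i,K_j)$) and uses the same observation that $Lk(K_i,K_j)$ is a half-integer exactly when $\pi(K_i,K_j)=1$ to see the exponent is an integer; for invariance the paper simply invokes that the ordinary linking number and the parity data are link invariants, which is what your Reidemeister-by-Reidemeister check spells out in more detail.
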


\begin{proof}
Consider a diagram of an oriented link $L$, and a multi-core decomposition $L = C_1 \cup \ldots \cup C_n \cup M_n$.  If $K_i$ and $K_j$ belong to the same core, then the modified linking number is the ordinary linking number and is an integer since each core is an even sub-link.

If $K_i$ and $K_j$ do not belong to the same core, but they do interact in an even number of mixed-crossings, then $\widetilde{Lk}$ is still an integer.  Otherwise, $K_i$ and $K_j$ interact in an odd number of mixed-crossings.  Thus $Lk(K_i, K_j)$ is a half-integer, and because $\pi(K_i,K_j)= 1$ in this case, the exponent on $-1$ in the definition of $\widetilde{Lk}$ will be an integer.

The fact that $\widetilde{Lk}$ is an oriented virtual link invariant follows from the fact that the ordinary linking number and parity are link invariants.
\end{proof}

We can use the modified linking number to extend $\lambda$ from even links to all virtual links:
\begin{equation}
\label{eqn:lambda}
\tilde{\lambda}(L) = \sum_{1 \leq i < j \leq \ell} \widetilde{Lk}(K_i,K_j).  
\end{equation}

Since $\tilde{\lambda}$ is defined in terms of linking numbers, which are invariant under the Reidemeister moves (virtual and classical), we immediately obtain: 

\begin{lem}
\label{lem:lambdaInvariant}
The number $\tilde{\lambda}$ is an oriented virtual link invariant.
\end{lem}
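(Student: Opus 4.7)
The plan is to reduce the statement directly to \Cref{lem:modLinkingNumber} together with the invariance already established in \Cref{lem:parity}. By definition, $\tilde{\lambda}(L) = \sum_{1 \le i < j \le \ell} \widetilde{Lk}(K_i, K_j)$ is a finite sum indexed over ordered pairs of distinct components of $L$. Since neither the classical nor the virtual Reidemeister moves change the component set, the number of components $\ell$ and the indexing set $\{(i,j) : 1 \le i < j \le \ell\}$ are intrinsic to $L$. Thus it suffices to verify invariance of $\tilde{\lambda}$ term by term.

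Next I would invoke \Cref{lem:modLinkingNumber}, which already asserts that each summand $\widetilde{Lk}(K_i, K_j)$ is an oriented virtual link invariant. Unpacking this: the ordinary linking number $Lk(K_i, K_j)$ is preserved by every Reidemeister move; the component-to-component parity $\pi(K_i, K_j)$ is preserved by \Cref{lem:parity}; and the component parity function $p(K_i, K_j)$ depends only on the multi-core decomposition $L = C_1 \cup \cdots \cup C_n \cup M_n$, which is uniquely determined by $L$ via iterated application of \Cref{lem:parity} to \Cref{defn:EvenCore} and \Cref{defn:Mantle}. Consequently the sign correction $(-1)^{p(K_i,K_j)\cdot(Lk(K_i,K_j) + \frac12 \pi(K_i,K_j))}$ appearing in the definition of $\widetilde{Lk}$ is invariant, and so is its product with $Lk(K_i,K_j)$.

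Summing a finite collection of invariants over an invariantly-indexed set yields an invariant, which completes the argument. I do not anticipate any genuine obstacle here, since all of the nontrivial bookkeeping—in particular the well-definedness of the sign exponent and the half-integer behavior of $\widetilde{Lk}$ for pairs crossing different cores—has already been absorbed into \Cref{lem:modLinkingNumber}. The only point worth flagging is that one must use the \emph{oriented} version of the ordinary linking number throughout, so the invariance is with respect to the oriented virtual link, consistent with the statement of the lemma.
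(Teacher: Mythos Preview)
Your proposal is correct and follows the same approach as the paper, which simply observes that $\tilde{\lambda}$ is a finite sum of the modified linking numbers $\widetilde{Lk}(K_i,K_j)$, each of which is already an oriented virtual link invariant by \Cref{lem:modLinkingNumber}. Your write-up is in fact more careful than the paper's one-line justification, explicitly noting why the index set and each ingredient ($Lk$, $\pi$, $p$) are invariant, but the underlying logic is identical.
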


\begin{rem}
\label{rem:relateLambda}
When $L$ is even, $L$ is the core. Thus, for an even link $L$, $\lambda=\tilde{\lambda}$ and $\tilde{\lambda}$ extends the definition of $\lambda$ to odd links. \label{rem:modified-equals-regular-linking-number}
\end{rem}

The number $\tilde{\lambda}$ is well-behaved under a change of orientation.  Suppose $K_s \in C_r$.  Let $L_s$ be the link $L$ with the same orientations on the components except the orientation of $K_s$ reversed.  Let $\overline{K_s}$ denote the component $K_s$ with the opposite orientation.  Subtracting $\tilde{\lambda}(L) - \tilde{\lambda}(L_s)$, we pick up only the terms where the orientation changes.  Thus:
\begin{eqnarray*}
\tilde{\lambda}(L) - \tilde{\lambda}(L_s) & = & \sum_{1\leq  j \leq \ell} \left( \widetilde{Lk}(K_s, K_j) - \widetilde{Lk}(\overline{K_s}, K_j)\right) \\
 & = & \sum_{1 \leq j \leq\ell} \left( (-1)^{p(K_s, K_j) \cdot \left( Lk(K_s, K_j) + \frac12 \pi(K_s,K_j)\right)} \right. \\
 & & \left. + (-1)^{p(K_s, K_j) \cdot \left(-Lk(K_s, K_j) + \frac12 \pi(K_s,K_j)\right)} \right) Lk(K_s,K_j).
\end{eqnarray*}
For any $j$ such that $K_j \in C_r$, the term in the sum above becomes $2Lk(K_s, K_j)$.  It may be that $K_s$ and $K_j$ interact in an odd number of classical crossings, in which case $2Lk(K_s, K_j)$ may be odd.  However, if that happens, it must do so for an even number of such $j$, because $C_r$ is an even sub-link.

For any $j$ such that $K_j \notin C_r$, the component  parity function is given by $p(K_s,K_j) = 1$, and so the exponents come into play.  It is still possible that $K_s$ and $K_j$ interact in an even or odd number of classical crossings.  If $K_s$ interacts with $K_j$ in an odd number of classical  crossings, then $\pi(K_s,K_j) = 1$ and one of the exponents above will be even and the other will be odd.  Hence that term will contribute $0$.  Otherwise, $\pi(K_s,K_j) = 0$ and the exponents will both be even or both be odd. Thus, that term will contribute $\pm 2Lk(K_s,K_j)$, which is an even number, since $Lk(K_s,K_j)$ is an integer in this case.  

If $K_r \in M_n$, the argument is similar to the previous case.  Hence, these observations, together with \Cref{lem:lambdaInvariant}, prove:

\begin{lem}
\label{lem:lambdaInvariant2}
If $L_s$ is the virtual link obtained from $L = K_1 \cup \ldots \cup K_\ell$ by reversing the orientation on component $K_s$ then $\tilde{\lambda}(L)$ and $\tilde{\lambda}(L_s)$ differ by an even integer.  
\end{lem}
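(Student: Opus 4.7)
The plan is to compute the difference $\tilde{\lambda}(L) - \tilde{\lambda}(L_s)$ directly and show that every term in the resulting sum is even. Since reversing the orientation of $K_s$ only affects terms of $\tilde{\lambda}$ that involve $K_s$, the difference collapses to a sum over $j \neq s$ of $\widetilde{Lk}(K_s, K_j) - \widetilde{Lk}(\overline{K_s}, K_j)$. Because $Lk(\overline{K_s}, K_j) = -Lk(K_s, K_j)$, while $\pi$ and $p$ do not depend on orientation, this difference has the explicit form written in the paragraph preceding the lemma. My job is to unpack that formula case by case using the multi-core decomposition $L = C_1 \cup \ldots \cup C_n \cup M_n$ (and using \Cref{lem:lambdaInvariant} to know $\tilde{\lambda}$ is well-defined so parity is the only thing to track).

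First I would handle the case $K_s \in C_r$ for some $r$. Split the summation over $j$ according to whether $K_j \in C_r$, $K_j$ lies in some other core $C_{r'}$, or $K_j \in M_n$. When $K_j \in C_r$, we have $p(K_s, K_j) = 0$, both exponents of $-1$ vanish, and the contribution is $2 Lk(K_s, K_j)$. These contributions may be individually odd half-integers times $2$ (i.e.\ odd integers), but summing over all such $j$ the total is even: since $C_r$ is an even sub-link, $\pi(K_s) = 0$ inside $C_r$, so the number of $K_j \in C_r$ interacting with $K_s$ in an odd number of mixed crossings is even, and each such term contributes an odd integer while the rest contribute even integers. When $K_j$ lies outside $C_r$, we have $p(K_s, K_j) = 1$, and I would split again on $\pi(K_s, K_j)$: if $\pi(K_s, K_j) = 1$ then the two exponents have opposite parities so the bracketed sum is zero; if $\pi(K_s, K_j) = 0$ then $Lk(K_s, K_j) \in \mathbb{Z}$ and the contribution is $\pm 2 Lk(K_s, K_j)$, which is even.

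Next I would handle the case $K_s \in M_n$. The argument is essentially identical, except that now $p(K_s, K_j) = 1$ for every $j \neq s$, so only the analysis of the ``outside the core'' case is needed. Each term is either zero (when $\pi(K_s, K_j) = 1$) or $\pm 2 Lk(K_s, K_j)$ with $Lk(K_s, K_j) \in \mathbb{Z}$ (when $\pi(K_s, K_j) = 0$), hence even.

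Combining the two cases shows $\tilde{\lambda}(L) - \tilde{\lambda}(L_s) \in 2\mathbb{Z}$. The main obstacle is not conceptual but bookkeeping: one must carefully check that the potential half-integer values of $Lk(K_s, K_j)$ are paired up in the right way by the parity function so that odd contributions cancel in pairs. The key input is that each core $C_r$ is an even sub-link, so within any core the number of components having odd component-to-component parity with a given $K_s \in C_r$ is itself even, which is exactly what is needed to promote a sum of odd integers to an even total.
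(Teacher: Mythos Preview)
Your proposal is correct and follows essentially the same argument as the paper: the paper also computes $\tilde{\lambda}(L) - \tilde{\lambda}(L_s)$ term by term, splits according to whether $K_j$ lies in the same core as $K_s$, and handles each subcase via the same parity analysis (the $2Lk(K_s,K_j)$ contributions inside $C_r$ summing to an even integer because $C_r$ is even, the outside-$C_r$ terms vanishing when $\pi=1$ and contributing an even integer when $\pi=0$, and the $K_s\in M_n$ case reducing to the outside-core analysis). Your write-up is in fact slightly more explicit about the $K_s\in M_n$ case than the paper's ``the argument is similar,'' but there is no substantive difference.
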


By definition $\tilde{\lambda}$ is possibly a half-integer, and in many cases, an integer.  But, in either case the previous lemma guarantees that $(-1)^{\tilde{\lambda}}$ is invariant of the choice of orientation of $L$, since changing the orientation on a component changes $\tilde{\lambda}$ by an even integer. (Here and throughout the paper, we take $(-1)^\frac12 = {\rm i}$.)

\begin{thm}
\label{thm:lambdaInvariant3}
The complex number $(-1)^{\tilde{\lambda}}$ is invariant of the choice of orientation of $L$.
\end{thm}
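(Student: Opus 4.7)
The plan is to reduce Theorem \ref{thm:lambdaInvariant3} to Lemma \ref{lem:lambdaInvariant2} by using the fact that any two orientations of $L$ differ by a finite sequence of single-component orientation reversals. Concretely, I would argue that if $L = K_1 \cup \ldots \cup K_\ell$ and $L'$ is $L$ with some subset of components reoriented, then $L'$ is obtained from $L$ by reversing the orientation on those components one at a time. By Lemma \ref{lem:lambdaInvariant2}, each individual reversal changes $\tilde{\lambda}$ by an even integer, so $\tilde{\lambda}(L') - \tilde{\lambda}(L)$ is a sum of even integers, hence itself an even integer.

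Next I would address the multiplicativity issue introduced by the convention $(-1)^{1/2} = {\rm i}$. Since $\tilde{\lambda} \in \tfrac12 \mathbb{Z}$, I would define $(-1)^{n/2} \coloneqq {\rm i}^n$ for $n \in \mathbb{Z}$, and verify the obvious identity
\[
(-1)^{a+b} = (-1)^a \cdot (-1)^b \quad \text{for all } a, b \in \tfrac12 \mathbb{Z},
\]
which is immediate from ${\rm i}^{m+n} = {\rm i}^m {\rm i}^n$. Once this is in place, writing $\tilde{\lambda}(L') = \tilde{\lambda}(L) + 2k$ for some $k \in \mathbb{Z}$ gives
\[
(-1)^{\tilde{\lambda}(L')} = (-1)^{\tilde{\lambda}(L)} \cdot (-1)^{2k} = (-1)^{\tilde{\lambda}(L)} \cdot {\rm i}^{4k} = (-1)^{\tilde{\lambda}(L)},
\]
which is precisely the desired invariance.

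I expect the step requiring the most care, and really the only non-routine content, to be the application of Lemma \ref{lem:lambdaInvariant2} across a sequence of reorientations rather than a single one. The subtlety is that $\tilde{\lambda}$ is built from the modified linking number $\widetilde{Lk}$, which depends on the parity function $p$ and on $\pi(K_i, K_j)$, and one might worry that the "intermediate" links that appear as we flip components one at a time inherit the same multi-core decomposition as $L$. However, since $p$ and $\pi$ are defined purely in terms of mixed-crossing counts and the multi-core decomposition, and these are all unchanged by orientation reversal (the parities and the decomposition are invariants of the unoriented link), Lemma \ref{lem:lambdaInvariant2} applies verbatim at each step. This observation makes the induction on the number of reversed components trivial, and the theorem follows.
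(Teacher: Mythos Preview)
Your proposal is correct and follows essentially the same approach as the paper, which simply notes (in the sentence preceding the theorem) that Lemma~\ref{lem:lambdaInvariant2} guarantees $(-1)^{\tilde{\lambda}}$ is orientation-independent because a single-component reversal changes $\tilde{\lambda}$ by an even integer. Your write-up is more careful than the paper's in spelling out the iteration over several reversed components, the multiplicativity of $(-1)^{\cdot}$ on $\tfrac12\mathbb{Z}$, and the orientation-independence of the multi-core decomposition, but these are exactly the details the paper leaves implicit.
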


We now have all of the ingredients in place to define the \emph{unoriented Jones polynomial} for any virtual link $L$.

\bigskip
\begin{defn}[Unoriented Jones Polynomial] The {\em unoriented Jones polynomial} for a virtual link $L$ is
\begin{equation}
\label{eqn:unorientedVirtualJones}
\widetilde{J}_L(q) = (-1)^{\left( \tilde{\lambda} - s_- - \frac12 m \right)} q^{\left(s_+-2s_--\frac12 m\right)} \langle L\rangle.
\end{equation}  
\end{defn}
\bigskip

Note that this definition extends both the definition of the unoriented Jones polynomial for classical links (cf.  \Cref{eq:Unoriented-Jones-Poly} and \Cref{eq:n_-}) and even links by \Cref{rem:modified-equals-regular-linking-number}.  When $L$ is an even virtual link (i.e. when $L$ is its own even core), these two formulas are identical.  

As observed in \Cref{thm:lambdaInvariant3}, $(-1)^{\tilde{\lambda}}$ need not be a real number, but it turns out that $(-1)^{\tilde{\lambda}-\frac12 m}$ is.  In particular, given a diagram of a virtual link $L = K_1 \cup \ldots \cup K_\ell$, observe that if $\pi(K_i,K_j) = 1$ then $\widetilde{Lk}(K_i,K_j)$ is a half-integer.  If there is an odd number of such pairs, then $\widetilde{\lambda}$ will be a half-integer as well.  Otherwise, $\widetilde{\lambda}$ will be an integer.  Similarly, in counting the total number of mixed crossings, if there is an odd number of pairs of components such that $\pi(K_i,K_j) = 1$, then there will be an odd number of mixed crossings in the diagram.  Hence, $\frac12 m$ will be a half-integer in this case, and will be an integer otherwise. In either case, noting that $s_-\in \mathbb{Z}$, we obtain the following lemma.

\begin{lem}
\label{lem:lambdaPlusMixed}
For any virtual link $L$, $\tilde{\lambda} - \frac12 m \in \mathbb{Z}$, and hence, $(-1)^{\tilde{\lambda} - s_- - \frac12 m} \in \{-1,1\}$.
\end{lem}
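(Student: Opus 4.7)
The plan is to reduce the lemma to a single parity bookkeeping observation: both $\tilde{\lambda}$ and $\frac12 m$ have the same fractional part modulo $\mathbb{Z}$, controlled by the number of unordered pairs of components that are joined by an odd number of mixed crossings. Concretely, let
\[
N \;=\; \#\{(i,j) : 1\le i<j\le \ell,\; \pi(K_i,K_j)=1\}.
\]
I will show $\tilde{\lambda} \equiv \tfrac12 N \pmod{\mathbb Z}$ and $\tfrac12 m \equiv \tfrac12 N \pmod{\mathbb Z}$, so the difference lies in $\mathbb Z$.

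First I would handle $\tilde{\lambda}$. For a fixed pair $i<j$, the ordinary linking number $Lk(K_i,K_j)$ is a half-integer exactly when the two components meet in an odd number of classical crossings, i.e.\ when $\pi(K_i,K_j)=1$, and otherwise it is an integer. Since $\widetilde{Lk}(K_i,K_j)$ differs from $Lk(K_i,K_j)$ only by a sign, it has the same fractional part, so $\widetilde{Lk}(K_i,K_j)\equiv \tfrac12\pi(K_i,K_j)\pmod{\mathbb Z}$. Summing \Cref{eqn:lambda} over all pairs gives $\tilde{\lambda}\equiv \tfrac12 N \pmod{\mathbb Z}$.

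Next I would handle $\tfrac12 m$. Writing $m=\sum_{i<j} m_{ij}$, where $m_{ij}$ is the number of mixed crossings between $K_i$ and $K_j$, the contribution of each pair to $\tfrac12 m$ modulo $\mathbb Z$ is $0$ when $m_{ij}$ is even and $\tfrac12$ when $m_{ij}$ is odd. Since $m_{ij}$ is odd exactly when $\pi(K_i,K_j)=1$, we get $\tfrac12 m \equiv \tfrac12 N \pmod{\mathbb Z}$. Subtracting the two congruences yields $\tilde{\lambda}-\tfrac12 m\in\mathbb Z$. Combined with $s_-\in\mathbb Z$, the exponent $\tilde{\lambda}-s_--\tfrac12 m$ is an integer, so $(-1)^{\tilde{\lambda}-s_--\frac12 m}\in\{-1,1\}$.

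The main obstacle is essentially a sanity check rather than a real difficulty: one must be sure that the sign prefactor $(-1)^{p(K_i,K_j)\cdot(\cdots)}$ in the definition of $\widetilde{Lk}$ does not alter the fractional part of the linking number. Since this prefactor is always $\pm 1$, and $\pm$ a half-integer is still a half-integer with the same denominator, the reduction modulo $\mathbb Z$ is unaffected, so the argument goes through cleanly.
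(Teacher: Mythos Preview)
Your proof is correct and follows essentially the same approach as the paper: both arguments hinge on the observation that a pair $(K_i,K_j)$ contributes a genuine half-integer to $\tilde\lambda$ and to $\tfrac12 m$ precisely when $\pi(K_i,K_j)=1$, so the number $N$ of such pairs governs the fractional part of both quantities simultaneously. Your version is slightly more explicit, phrasing things as congruences $\tilde\lambda\equiv\tfrac12 N\equiv\tfrac12 m\pmod{\mathbb Z}$, while the paper argues by a case split on the parity of $N$; the content is the same.
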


\begin{thm}
\label{thm:UnorientedJonesISInvariant}
The unoriented Jones polynomial, $\widetilde{J}_L$, is an unoriented virtual link invariant.  Moreover, $\widetilde{J}_L \in \mathbb{Z}[q^{-\frac12},q^{\frac12}]$.
\end{thm}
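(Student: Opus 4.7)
The plan is to establish invariance of $\widetilde{J}_L$ under (i) each classical Reidemeister move, (ii) each virtual Reidemeister move, and (iii) reversal of the orientation on any component of the diagram, and then to deduce the ring membership from \Cref{lem:lambdaPlusMixed}. Because $\tilde\lambda$ is the only orientation-dependent ingredient in the definition (the bracket, $s_\pm$, and $m$ all come directly from the unoriented diagram), once (i), (ii), and (iii) are verified, $\widetilde{J}_L$ will descend to an invariant of the unoriented virtual link.

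For each move I would track how $\langle L\rangle$, $s_\pm$, $m$, and $\tilde\lambda$ change and check that the prefactor $(-1)^{\tilde\lambda - s_- - \frac12 m}\,q^{s_+-2s_- - \frac12 m}$ shifts in exactly the opposite way. R3 and each virtual Reidemeister move leave all five quantities fixed; in particular $\tilde\lambda$ is preserved because each $Lk(K_i,K_j)$ and each relevant parity is an invariant (\Cref{lem:parity}, \Cref{lem:modLinkingNumber}). For R1, a positive curl increases $s_+$ by one and multiplies $\langle L\rangle$ by $q^{-1}$, so the $q$-factor of the prefactor picks up $q^{+1}$ and the sign is unchanged; for a negative curl the standard computation gives $\langle L\rangle$ multiplied by $-q^{2}$, while $(-1)^{-s_-}$ flips sign and the $q$-factor picks up $q^{-2}$, again cancelling. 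For R2, as recalled in \Cref{sec:unoriented-jones-poly-classical}, the paper's convention for the Kauffman bracket is not R2-invariant: inserting the two R2 crossings multiplies $\langle L\rangle$ by $-q$. If the R2 is between strands of the same component, the two inserted crossings are one positive and one negative, so $s_+$ and $s_-$ each change by one with $m$ and $\tilde\lambda$ unchanged; if the R2 is between strands of two different components, $m$ changes by two while $s_\pm$ is unchanged, and $\tilde\lambda$ is still preserved because the two new mixed crossings contribute oppositely-signed terms to $Lk(K_i,K_j)$. In both subcases the prefactor picks up an overall factor of $-q^{-1}$, exactly cancelling the $-q$ bracket change.

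For invariance under reversal of the orientation of a component $K_s$: the quantities $\langle L\rangle$, $s_\pm$, and $m$ are constructed from the unoriented diagram and so are manifestly unchanged. \Cref{lem:lambdaInvariant2} says that $\tilde\lambda$ changes by an even integer under such a reversal, so \Cref{thm:lambdaInvariant3} gives that $(-1)^{\tilde\lambda}$ is orientation-independent, and hence so is the entire prefactor. Combined with the Reidemeister invariance, this shows that $\widetilde{J}_L$ depends only on the underlying unoriented virtual link.

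Finally, for $\widetilde{J}_L\in\mathbb{Z}[q^{-\frac12},q^{\frac12}]$: \Cref{lem:lambdaPlusMixed} gives $(-1)^{\tilde\lambda - s_- - \frac12 m}\in\{-1,+1\}$, so the sign factor is an integer; the bracket lies in $\mathbb{Z}[q^{-1},q]$; and $s_+-2s_- - \frac12 m\in\frac12\mathbb{Z}$, so multiplying by $q^{s_+-2s_- - \frac12 m}$ keeps us in $\mathbb{Z}[q^{-\frac12},q^{\frac12}]$. The main obstacle I expect is the R2 bookkeeping between distinct components: one must simultaneously use that the bracket is not R2-invariant in this convention, that $m$ changes by two, and that the two new mixed crossings have opposite signs so that $\tilde\lambda$ is preserved; once that case is sorted out the other cases are all routine.
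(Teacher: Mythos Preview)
Your proposal is correct and follows essentially the same approach as the paper, with the same key ingredients: \Cref{thm:lambdaInvariant3} for orientation independence of $(-1)^{\tilde\lambda}$ and \Cref{lem:lambdaPlusMixed} for the ring-membership claim. The paper's proof is considerably terser --- it treats Reidemeister invariance as already implicit from the discussion in \Cref{sec:unoriented-jones-poly-classical} together with \Cref{lem:lambdaInvariant}, and only explicitly addresses orientation independence --- whereas you carry out the case-by-case Reidemeister bookkeeping in full; but the underlying argument is the same.
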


\begin{proof}
The Kauffman bracket clearly does not depend on orientation.  The normalization factor $q^{\left(s_+ -2s_- -\frac12 m\right)}$ depends only on self-crossings and the total number of mixed-crossings, neither of which change under an orientation switch, and $(s_- + \frac12 m)$ is orientation invariant for similar reasons. Thus, by \Cref{thm:lambdaInvariant3}, the first statement follows.

The second statement follows from \Cref{lem:lambdaPlusMixed} and the fact that $q^{\left(s_+-2s_--\frac12 m\right)}\langle L\rangle$ has integer coefficients.
\end{proof}

\subsection{Evaluating the Unoriented Jones Polynomial at $1$}  In this subsection, we explain why the choices of  $(-1)^{-n_-}$ and $(-1)^{\tilde{\lambda}-s_- - \frac12m}$ are important normalization factors for the oriented and unoriented Jones polynomials, $J_L$ and $\widetilde{J}_L$.  Namely, we will show that evaluating either polynomial at $1$ is either $2^\ell$ for an $\ell$-component even link or $0$ if the link is odd.  We start with the following definition and lemma.

\begin{defn}
\label{def:bracketInvariant}
We define a numerical invariant for virtual links by evaluating the bracket polynomial at 1:
$$[L] = \langle L \rangle (1).$$
\end{defn}

Observe that $J_L(1) = (-1)^{-n_-} [L]$ and that $\widetilde{J}_L(1) = (-1)^{\tilde{\lambda}-s_- -\frac12 m}[L]$.

\begin{lem}
\label{lem:CrossingChange}
The usual (oriented) Jones polynomial and unoriented Jones polynomial evaluated at $1$ are each invariant under crossing changes.  That is, if $L_1$ and $L_2$ are oriented virtual links with diagrams that are identical except for a single crossing change, then 
$$J_{L_1}(1) = J_{L_2}(1), \text{ and}$$
$$\widetilde{J}_{L_1}(1) = \widetilde{J}_{L_2}(1).$$
\end{lem}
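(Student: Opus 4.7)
The plan is to split the proof based on whether the changed crossing is a self-crossing of some component or a mixed crossing between two distinct components. The key observation for both settings is that at $q=1$ the Kauffman bracket changes sign under a crossing change: interchanging over and under at a classical crossing swaps the $A$- and $B$-smoothings in the skein $\langle\Across\rangle = \langle\Asmooth\rangle - q\langle\Bsmooth\rangle$, so evaluating at $q=1$ yields $[L_2] = \langle B\rangle|_{q=1} - \langle A\rangle|_{q=1} = -[L_1]$.

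For the oriented Jones polynomial, since $J_L(1) = (-1)^{-n_-}[L]$ and a crossing change shifts $n_-$ by $\pm 1$, the factor $(-1)^{-n_-}$ flips and cancels the flip in $[L]$, establishing $J_{L_1}(1) = J_{L_2}(1)$. (Equivalently, the oriented Jones skein $q^{-2}J(L_+) - q^2 J(L_-) = (q^{-1}-q)J(L_0)$ at $q=1$ gives $J(L_+)|_{q=1} = J(L_-)|_{q=1}$ directly.) For the unoriented Jones polynomial in the self-crossing sub-case, $m$, $\lambda$, and $\tilde{\lambda}$ are all preserved (self-crossings do not affect linking numbers between distinct components), while $s_-$ shifts by $\pm 1$, so $(-1)^{\tilde{\lambda} - s_- - \frac{1}{2} m}$ flips and again cancels the flip in $[L]$.

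For a mixed-crossing change between $K_i$ and $K_j$, $s_-$ and $m$ are unchanged while $Lk(K_i, K_j)$ shifts by $\pm 1$. Using the defining formula $\widetilde{Lk}(K_i,K_j) = (-1)^{p(K_i,K_j)(Lk(K_i,K_j) + \frac{1}{2} \pi(K_i,K_j))} Lk(K_i,K_j)$, I compute the change in $\tilde{\lambda}$ in three sub-cases. When $p(K_i, K_j) = 0$, $\widetilde{Lk}$ equals $Lk$, so $\tilde{\lambda}$ shifts by $\pm 1$ and the normalization flips. When $p(K_i, K_j) = 1$ and $\pi(K_i, K_j) = 0$ (so $Lk \in \mathbb{Z}$), a direct computation yields a shift of $-(-1)^{Lk}(2Lk+1)$, an odd integer, and again the normalization flips. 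In both sub-cases invariance follows by cancellation with the sign flip of $[L]$.

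The main obstacle is the remaining sub-case, $p(K_i,K_j) = 1$ and $\pi(K_i,K_j) = 1$. Here $Lk \in \frac{1}{2} + \mathbb{Z}$, so $2Lk+1$ is even, and the corresponding shift in $\tilde{\lambda}$ is $-(-1)^{Lk+1/2}(2Lk+1)$, an even integer; the normalization factor does not flip. The resolution is to observe that $p(K_i,K_j) = 1$ forces $K_i$ and $K_j$ into distinct cores, so the multi-core decomposition of $L$ is non-trivial and $L$ is odd. One then checks directly from the bracket state sum at $q=1$ that the presence of such an odd-parity pair forces the two smoothings at the changed crossing to produce equal circle counts (as in the virtual Hopf link, where both smoothings yield a single circle after the virtual crossing is absorbed by a detour), so $[L] = 0$. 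Therefore both $\widetilde{J}_{L_1}(1)$ and $\widetilde{J}_{L_2}(1)$ vanish and the identity holds trivially.
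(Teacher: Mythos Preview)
Your argument tracks the paper's closely for $J_L$ and for the self-crossing and same-core mixed-crossing cases. The paper does not split the different-cores case further: it simply asserts that when $K_i,K_j$ lie in different cores the shift of $Lk(K_i,K_j)$ by $1$ ``is sufficient to change the parity of $\widetilde{Lk}(K_i,K_j)$.'' You go further, correctly computing that in the sub-case $p=1$, $\pi=0$ the shift in $\widetilde{Lk}$ is odd, and discovering that in the sub-case $p=1$, $\pi=1$ it is even---so the normalization does \emph{not} flip there.

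The gap is in your resolution of that last sub-case. Your conclusion $[L]=0$ is correct, since $p(K_i,K_j)=1$ forces $L$ to be odd and \Cref{thm:ComponentCount} then gives $J_L(1)=(-1)^{-n_-}[L]=0$. But the justification you offer---that ``the two smoothings at the changed crossing produce equal circle counts \ldots\ so $[L]=0$''---is not a proof. The value $[L]$ is the signed sum $\sum_{\mathfrak{s}}(-1)^{n_B(\mathfrak{s})}2^{|\mathfrak{s}|}$ over \emph{all} states, not a comparison of two smoothings at a single crossing; expanding at $c$ gives $[L]=[L_A]-[L_B]$, and nothing about $\pi(K_i,K_j)=1$ forces $[L_A]=[L_B]$ state by state. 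The virtual Hopf example works only because it has a single classical crossing, so the full state sum \emph{is} those two terms; this does not generalize. The clean fix is to observe that your $J_L$ argument is already complete and that the proof of \Cref{thm:ComponentCount} uses only the $J_L$ half of this lemma; hence you may invoke $[L]=0$ for odd $L$ from \Cref{thm:ComponentCount} without circularity, or simply defer the $\widetilde{J}_L$ statement until after that theorem.
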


\begin{proof}
Consider $\widetilde{J}$ first.  The effect of a crossing change on $[L]$ is to multiply by $-1$.  Thus it suffices to show that $\left(\tilde{\lambda} - s_- - \frac12 m \right)$ changes parity under a crossing change.  Suppose $c$ is the crossing to be changed.  If $c$ is a self-crossing, then $\tilde{\lambda}$ and $m$ are unaffected (since they only consider mixed crossings), and $s_-$ changes by $\pm1$.  

If $c$ is a mixed crossing then $s_-$ and $\frac12 m$ remain unchanged.  If $c$ involves two components in the same core of $L$, then the crossing change results in a net change of 1 in the linking number.  If the crossing change is between two components that do not belong to the same core, then we consider the effect of the crossing change on $\widetilde{Lk}(K_i, K_j)$ where $K_i$ and $K_j$ are the two components that cross at c.  When the crossing is switched, $Lk(K_i,K_j)$ will change by 1 which is sufficient to change the parity of $\widetilde{Lk}(K_i, K_j)$ as desired.

The proof for $J$ is simpler:  changing a positive crossing to a negative (or vice-versa) clearly changes the parity of $n_-$ and hence, compensates for the sign change of $[L]$.
\end{proof}

\begin{rem}
One possible extension of $\lambda$ we considered was to use the classical/even $\lambda$ only on the crossings in the even core(s) and ignore all other crossings.   \Cref{lem:CrossingChange} shows why we needed a modified linking number that incorporated every mixed-crossing.
\end{rem}

It is well known that, for a classical link, the usual (oriented) Jones polynomial is, when evaluated at 1, equal to two to the number of components of the link.  It was not known how this result extended to virtual links. The following theorem is a new theorem in the (non-classical) virtual link literature.

\begin{thm}
\label{thm:ComponentCount}
If $L$ is an oriented virtual link with $\ell$ components, then 
\[J_L(1) = \begin{cases} 
      2^{\ell} & \text{if }L \text{ is even} \\
      0 & \text{if  }L\text{ is odd.} 
   \end{cases}
\]
The number, $J_L(1)$, is independent of the choice of orientation.
\end{thm}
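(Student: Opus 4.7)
The plan is to proceed by induction on the number of classical crossings in a diagram of $L$, using Lemma~\ref{lem:CrossingChange} crucially. First, by Lemma~\ref{lem:CrossingChange}, I may change every classical crossing of $L$ to positive without altering $J_L(1)$; this makes $n_- = 0$ and gives $J_L(1) = [L]$, where $[L] := \langle L\rangle(1)$. Thus it suffices to show that $[L] = 2^\ell$ (if $L$ is even) or $[L] = 0$ (if $L$ is odd). The base case of zero classical crossings is immediate: such an $L$ is equivalent via detour moves to $\ell$ disjoint circles, so $\langle L\rangle = (q^{-1}+q)^\ell$ and $[L] = 2^\ell$, and with no mixed classical crossings $L$ is vacuously even.

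For the inductive step, I would pick any classical crossing $c_0$ and apply the Kauffman bracket skein relation at $q = 1$: $[L] = [L_A] - [L_B]$, where $L_A$ and $L_B$ are the A- and B-smoothings at $c_0$. Each has one fewer classical crossing and inherits the all-positive property, so the inductive hypothesis gives $[L_A] \in \{2^{\ell_A}, 0\}$ (according to whether $L_A$ is even or odd), and analogously for $[L_B]$. The remaining work is a case analysis on whether $c_0$ is a self-crossing (in which case one of the two smoothings splits the affected component into two, and the other preserves it as one) or a mixed crossing (in which case both smoothings either merge or keep distinct the two affected components), together with careful tracking of how the parity distribution on $L_A, L_B$ depends on the global topology at $c_0$.

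The main obstacle is a sub-case unique to non-classical virtual links: a self-crossing on an even component of an even link $L$ whose splitting smoothing yields two odd components, a configuration forbidden classically by the Jordan curve theorem. In this sub-case the split link contributes $0$ by induction while the unsplit link contributes $2^\ell$, so obtaining $[L] = +2^\ell$ rather than $-2^\ell$ requires that Kauffman's geometric A/B convention identifies the unsplit smoothing as A in exactly these configurations. Establishing this correspondence in all relevant sub-cases—together with the analogous analyses for mixed crossings and for odd $L$—constitutes the main technical content of the proof.

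Finally, for the orientation-independence of $J_L(1)$: the Kauffman bracket is orientation-free, so the only orientation-dependent factor is $(-1)^{-n_-}$. Reversing the orientation of a component $K_i$ flips the sign of each mixed crossing involving $K_i$, changing $n_-$ by a quantity with the same parity as $m_i$, the number of mixed crossings of $K_i$. When $L$ is even every $m_i$ is even, so $(-1)^{-n_-}$ is unchanged; when $L$ is odd, the main identity gives $J_L(1) = 0$, which is orientation-invariant regardless.
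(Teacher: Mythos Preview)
Your overall architecture---reduce to all-positive diagrams via Lemma~\ref{lem:CrossingChange}, then induct on the crossing number through the skein relation $[L]=[L_A]-[L_B]$---is exactly the paper's. The gap is in the inductive step: your claim that $L_B$ ``inherits the all-positive property'' is false. The $B$-smoothing of a positive crossing is never compatible with the ambient orientation, so to orient $L_B$ you must reverse an arc, and this flips the signs of some of the remaining crossings. Consequently you cannot conclude $[L_B]=J_{L_B}(1)$; rather $[L_B]=(-1)^{n_-(L_B)}J_{L_B}(1)$ for whatever orientation you place on $L_B$, and that sign is precisely what carries the parity information.

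Your proposed workaround---that in the problematic sub-case the $A$/$B$ convention happens to make the unsplit smoothing the $A$-smoothing---is not salvageable. For a \emph{positive} self-crossing the $A$-smoothing is the oriented resolution, and the oriented resolution of a self-crossing \emph{always} separates the component into two (this is the Seifert-circle picture). So in your sub-case $L_A$ is the split (odd) link and $L_B$ is the unsplit (even) one, giving $[L]=0-[L_B]$; you must explain why $[L_B]=-2^{\ell}$, not $+2^{\ell}$. The paper does this by tracking the reorientation: if $c$ is a mixed crossing on $K_i$, reversing the $K_i$-arc in $L_B$ flips the sign of every other mixed crossing of $K_i$, so $n_-(L_B)\equiv \pi(K_i)+1\pmod 2$ and one obtains
\[
J_L(1)=J_{L_A}(1)+(-1)^{\pi(K_i)}J_{L_B}(1).
\]
This single identity, together with $\pi(L_A)=\pi(L_B)$, yields both cases of the dichotomy without any geometric case analysis of which smoothing splits. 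The self-crossing case is handled by the analogous count. Your argument for orientation-independence at the end is correct.
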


We first noticed this result in the category of planar trivalent graphs (cf. \cite{Baldridge2018, BLM2018}), and wondered if a similar result held for virtual links.  The fact that it does was an important motivation behind this current paper and \cite{BKR2019} (See Future Aims, \Cref{section:FutureAims}).

\begin{proof}
Enumerate the components of $L$ by $L = K_1 \cup \ldots \cup K_\ell$.  The proof proceeds by induction on the number of classical crossings in the diagram of $L$.  The theorem is clearly true if there are no classical crossings.

Suppose that there exists some crossing for a diagram of $L$.  By repeated applications of \Cref{lem:CrossingChange} we can assume without loss of generality that every crossing of $L$ is positive, i.e., $n_-=0$.  Let $c$ be a (positive) crossing between $K_i$ and some other component $K_j$ of $L$.  If $L_A$ represents the link $L$ with an $A$-smoothing ($\Across \rightarrow \Asmooth$) at crossing $c$, and $L_B$ represents the link $L$ with a $B$-smoothing ($\Across \rightarrow \Bsmooth$) at crossing $c$, then
\begin{equation}
\label{eqn:Recursion}
[L] = [L_A] - [L_B].
\end{equation}
Both $L_A$ and $L_B$ have one less component than $L$ since $K_i$ is welded to $K_j$ (see \Cref{fig:smoothing}).  
\begin{figure}[H]
\includegraphics[scale = 0.5]{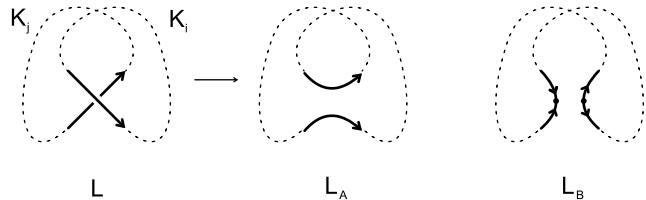}
\caption{Smoothing a crossing.}
\label{fig:smoothing}
\end{figure}
Observe that if $c$ is oriented as shown on the left side of \Cref{fig:smoothing}, then the $A$-smoothing $L_A$ is compatible with this orientation, while the $B$-smoothing $L_B$ must be reoriented as shown in \Cref{fig:reorient}. Let $K_{ij}$ stand for the component in $L_B$ with orientation given by $K_j$ and $\overline{K}_i$, i.e., the part of $K_i$ from $L$ but with the opposite orientation.
\begin{figure}[H]
\includegraphics[scale = 0.5]{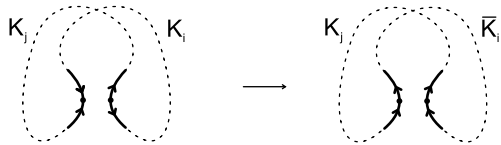}
\caption{Reorienting $L_B$.}
\label{fig:reorient}
\end{figure}
Since $n_-=0$ for $L$, $J_L(1)=[L]$.  Similarly, $J_{L_A}(1)=[L_A]$.  If $K_i$ has $k+1$ classical crossings in $L$, then after reversing the orientation of $K_i$ in $L_B$ to get an orientation for $K_{ij}$, the welded $K_{ij}$ component will have $k$ negative crossings in $L_B$.  Thus, $J_{L_B}(1)=(-1)^k[L_B]$.  Insert these equations for $L$, $L_A$, and $L_B$ into \Cref{eqn:Recursion} to get
\begin{equation}
\label{eqn:Recursion2}
J_L(1) = J_A(1) - (-1)^k J_B(1).
\end{equation}
Note that $(-1)^{k+1} = (-1)^{\pi(K_i)}$. Hence, we can rewrite \Cref{eqn:Recursion2} as $$
J_L(1) = J_A(1) + (-1)^{\pi(K_i)} J_B(1).$$

By induction, $J_A(1)$ must be either $2^{\ell - 1}$ or 0 based on the parity of $L_A$. Similarly, $J_B(1)$ is either $2^{\ell - 1}$ or 0 based on the parity of $L_B$.  Moreover, since $c$ is a crossing between two components of $L$, the parities of $L_A$ and $L_B$ are the same: $\pi(L_A) = \pi(L_B)$.  Thus,
$$J_L(1) = \left(1+(-1)^{\pi(K_i)}\right) 2^{\ell-1} \cdot \pi(L_A).$$

A similar argument applies when the chosen crossing involves only a single component.  In that case, $L_A$ and $L_B$ will have an extra component, but the reasoning remains essentially the same.  Thus, by induction the theorem follows.
\end{proof}

The proof above is for the usual (oriented) Jones polynomial for virtual links, but the result still holds for the unoriented Jones polynomial. There are two key observations needed to see why this is true.  First, by \Cref{thm:ComponentCount}, $[L]=0$ if $L$ is odd.  Thus, we need only check $\tilde{J}_L(1)$ for $L$ even. Second, for an even virtual link, $\tilde{\lambda} = \lambda$ (cf. \Cref{rem:relateLambda}), and $n_- = -\lambda + s_- +\frac12 m$.  Putting these observations together, we get:

\begin{cor}
\label{cor:ComponentCountforUnrientedJones}
If $L$ is an unoriented virtual link with $\ell$ components, then 
\[ \widetilde{J}_L(1) = \begin{cases} 
      2^{\ell} & \text{if }L \text{ is even} \\
      0 & \text{if  }L\text{ is odd.} 
   \end{cases}
\]
\end{cor}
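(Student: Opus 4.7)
The plan is to reduce the corollary to \Cref{thm:ComponentCount} by comparing the normalization factors of $J_L$ and $\widetilde{J}_L$ at $q=1$. Recall from \Cref{def:bracketInvariant} that both polynomials, when evaluated at $1$, are equal (up to sign) to $[L] = \langle L \rangle(1)$: specifically, $J_L(1) = (-1)^{-n_-}[L]$ and $\widetilde{J}_L(1) = (-1)^{\tilde{\lambda} - s_- - \frac12 m}[L]$. So the entire task is to show that these two signs coincide whenever $[L]\neq 0$, and to handle the case $[L]=0$ separately.

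First I would handle the odd case. By \Cref{thm:ComponentCount}, if $L$ is odd then $J_L(1) = 0$, and since $J_L(1) = (-1)^{-n_-}[L]$ with $(-1)^{-n_-} \in \{\pm 1\}$, this forces $[L] = 0$. Then $\widetilde{J}_L(1) = (-1)^{\tilde{\lambda} - s_- - \frac12 m}[L] = 0$ immediately, regardless of the value (or even the reality, via \Cref{lem:lambdaPlusMixed}) of the normalization factor.

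Next I would handle the even case. When $L$ is even, $L$ is its own core, so by \Cref{rem:relateLambda} we have $\tilde{\lambda} = \lambda$. Combining this with \Cref{eq:n_-}, namely $n_- = -\lambda + s_- + \frac12 m$, we get
\[
\tilde{\lambda} - s_- - \tfrac12 m \;=\; \lambda - s_- - \tfrac12 m \;=\; -n_-,
\]
so the two normalization factors $(-1)^{-n_-}$ and $(-1)^{\tilde{\lambda}-s_- - \frac12 m}$ are literally equal. Therefore $\widetilde{J}_L(1) = J_L(1)$, and a second appeal to \Cref{thm:ComponentCount} gives $\widetilde{J}_L(1) = 2^\ell$.

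There is no real obstacle here; the only thing to be careful about is that in the even case the identity $n_- = -\lambda + s_- + \frac12 m$ from \Cref{sec:unoriented-jones-poly-classical} was stated for classical links, so I would note that the derivation there (positive/negative crossings come in equal pairs at each pair of strands contributing to $Lk$, together with the self-crossing contribution) goes through unchanged for any even virtual link, which is exactly the setting in which $\tilde\lambda = \lambda$ is an honest integer by \Cref{lem:modLinkingNumber}. With that remark the corollary follows in two lines from \Cref{thm:ComponentCount}.
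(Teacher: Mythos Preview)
Your proof is correct and follows essentially the same route as the paper: first use \Cref{thm:ComponentCount} to get $[L]=0$ in the odd case, then in the even case invoke \Cref{rem:relateLambda} ($\tilde\lambda=\lambda$) together with $n_-=-\lambda+s_-+\tfrac12 m$ to match the two normalizations. Your extra caution about extending \Cref{eq:n_-} to virtual links is harmless but unnecessary: the identity $n_-=s_-+\tfrac12 m-\lambda$ is just the decomposition of negative crossings into self and mixed together with $\lambda=\tfrac12(m_+-m_-)$, and this holds for any oriented virtual link diagram, even or not.
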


Thus, $\tilde{J}_L(1)=J_L(1)$, which means $\tilde{J}_L$ continues to have the same non-negativity propoerty when evaluated at $1$ as the (oriented) Jones polynomial $J_L$.

\subsection{Other Component Parity Functions and Other Polynomials} \label{subsection:other-parity-functions} 
There are other component parity functions we might have chosen to use in our definition of the modified linking number. Each may be useful in other contexts, though they do not necessarily preserve the properties described  in \Cref{thm:ComponentCount} and \Cref{cor:ComponentCountforUnrientedJones}.

The first option is to redefine the parity function so that only components in the even core $C=C_1$ of $L$ evaluate to $0$:
$$
p_{C}(K_i, K_j) = \left\{
        \begin{array}{ll}
            0 & \quad K_i, K_j\in C \\
            1 & \quad \text{otherwise.}
        \end{array}
    \right.
$$
Using this component parity function in the place of $p$ in the definition of the modified linking number leads to a different $\lambda_C$ and another  polynomial $J_L^C$ for which \Cref{thm:UnorientedJonesISInvariant}  and \Cref{cor:ComponentCountforUnrientedJones} are both true.  While this choice satisfies both of the criteria listed above, the component parity function $p$ is still preferable, as it maximizes the behavior of even links in $L$.

Another option is to define the parity function to be $1$ on every pair of components:
$$
p_{M}(K_i, K_j) = 1 \mbox{ for all $K_i$ and $K_j$}.$$
Using this parity to define a modified linking number gives rise to a polynomial, $J_L^M$, for which \Cref{thm:UnorientedJonesISInvariant} is still true.  However, \Cref{cor:ComponentCountforUnrientedJones} will fail to be true.

The last option we consider is in some sense the simplest, as it does not use $\lambda$, or its variants, at all.  The polynomial we obtain from this choice is
\begin{equation}\label{eqn:J0}
J_L^0(q) = (-1)^{\left(-s_- - \frac12 m \right)} q^{\left(s_+-2s_--\frac12 m\right)} \langle L\rangle.
\end{equation}
The polynomial is an unoriented virtual link invariant in the sense that it does not require an orientation to define it. It can have complex  coefficients, so it does not satisfy the last part of \Cref{thm:UnorientedJonesISInvariant}, nor does it satisfy \Cref{cor:ComponentCountforUnrientedJones}.  Nevertheless, $J_L^0(q)$ has the advantage of being entirely determined by the graded Euler characteristic of the unoriented Khovanov homology described in \Cref{section:UnorientedKhovHom}.

\section{Bracket Homology and Khovanov Homology}
\label{Khovanov-Homology}

In this section, we briefly describe Khovanov homology
along the lines of \cite{Kho,DB1}, and we tell the story so that the gradings and the structure of the differential emerge in a natural way.
This approach to motivating the Khovanov homology uses elements of Khovanov's original approach, Viro's use of enhanced states for the bracket
polynomial \cite{V}, and Bar-Natan's emphasis on tangle cobordisms \cite{DB2}.  

We begin by working without using virtual crossings, and then introduce extra structure and generalize the Khovanov homology to virtual knots and links in the next section.

A key motivating idea involved in defining the Khovanov invariant is the notion of categorification. One would like to {\it categorify} the Kauffman bracket 
$\langle D \rangle$ for a link diagram $D$ of a link $L$. There are many meanings to the term categorify, but here the quest is to find a way to express the link polynomial
as a {\it graded Euler characteristic} $\langle D \rangle = \chi_{q}(H(D))$ for some homology theory associated with $\langle D \rangle$.  In this section, we define a homology theory with this property. Moreover, the homology theory we define can be used with an orientation on $L$  to get the usual Khovanov homology invariant of $L$.  We call this the {\em bracket homology of $D$} and denote it $H(D)$.

The bracket polynomial \cite{K, KNOTS} model for the Jones polynomial \cite{Jones,JO1,JO2,WITT} can be  described by the inductive expansion of unoriented crossings $\Across$ into {\em $A$-smoothings} $\Asmooth$ and {\em $B$-smoothings} $\Bsmooth$ on a link diagram $D$ via
Equation~\ref{eq:bracket-crossing} and Equation~\ref{eq:bracket-disjoint-circ}:
$\langle \Across \rangle=\langle \Asmooth \rangle-q\langle
\Bsmooth \rangle \label{kabr}$ with $\langle \bigcirc\rangle=(q+q^{-1})$.
While the bracket polynomial is often described in a variable $A$, it useful to work with the $q$-variable version in the context of Khovanov homology.

There is a well-known convention for describing the bracket state expansion by {\it enhanced states} where an enhanced state has a label of $1$ or $x$ on each of
its component loops. We then regard the value of the loop $q + q^{-1}$ as
the sum of the value of a circle labeled with a $1$ (the value is
$q$) added to the value of a circle labeled with an $x$ (the value
is $q^{-1}).$ 

To see how the Khovanov grading arises, consider the expansion of the 
bracket polynomial in enhanced states  $\mathfrak{s}$:
$$\langle D \rangle = \sum_{\s} (-1)^{n_{B}(\s)} q^{j(\s)}$$
where $n_{B}(\s)$ is the number of $B$-smoothings  in $\s$, $r(\s)$ is the number of loops in $\s$ labeled $1$ minus the number of loops
labeled $x,$ and $j(\s) = n_{B}(\s) + r(\s)$.
This can be rewritten in the following form:
$$
\langle D \rangle  =  \sum_{i \,,j} (-1)^{i} q^{j} \dim C^{i,j}(D) \label{eq:graded-Euler-Char}
$$
where we define $C^{i,j}(D)$ to be the linear span of the set of enhanced states with $n_{B}(\s) = i$ and $j(\s) = j.$  Then the number of such states is $\dim C^{i,j}(D).$   

We would like to turn the bigraded vector spaces $C^{i,j}$ into a  bigraded complex ($C^{i,j}$, $\partial$) with a differential
$$\partial:C^{i,j}(D)  \longrightarrow C^{i+1,j}(D).$$ 
The differential should increase the {\it homological grading} $i$ by $1$ and preserve the 
{\it quantum grading} $j.$
Then we could write
$$\langle D\rangle = \sum_{j} q^{j} \sum_{i} (-1)^{i} \dim C^{i,j}(D) = \sum_{j} q^{j} \chi\left(C^{\bullet, j}(D)\right),$$
where $\chi\left(C^{\bullet, j}(D)\right)$ is the Euler characteristic of the subcomplex $C^{\bullet, j}(D)$ for a fixed $j$. 
This formula would constitute a type of categorification of the bracket polynomial. 

Below, we
shall see how {\it the original Khovanov differential $\partial$ is uniquely determined by the restriction that $j(\partial \s) = j(\s)$ for each enhanced state
$\s$.} Since $j$ is 
preserved by the differential, these subcomplexes $C^{\bullet, j}$ have their own Euler characteristics and homology. We have
$\chi(H^{\bullet, j}(D)) = \chi(C^{\bullet, j}(D)) $ where $H^{\bullet, j}(D)$ denotes the bracket homology of the complex 
$C^{\bullet, j}(D)$. We can write
$$\langle D \rangle = \sum_{j} q^{j} \chi(H^{\bullet, j}(D)).$$ 
The last formula expresses the bracket polynomial as a {\it graded Euler characteristic}, $\chi_q(H(D))$, of a homology theory associated with the enhanced states of the bracket state summation. This is the desired categorification of the bracket polynomial. Khovanov proved that a gradings-shifted version of this homology theory (using an orientation of the link) is an invariant of oriented knots and links, and that the graded Euler characteristic of the gradings-shifted version is the usual (oriented) Jones polynomial.  Thus, he created a new and stronger invariant than the original Jones polynomial.

To define the differential regard two states as {\it adjacent} if one differs from the other by a single smoothing at some site. Let $(\s,\tau)$ denotes a pair consisting of an enhanced state $\s$ and a site $\tau$ of that state with $\tau$ of type $A$.  Consider all enhanced states $\s'$ obtained from $\s$ by resmoothing $\tau$ from $A$ to $B$ and relabeling (with $1$ or $x$) only those loops that are affected by the resmoothing. Call this set of enhanced states $S'[\s,\tau].$ Define the {\it partial differential} $\partial_{\tau}(\s)$ as a sum over certain elements in $S'[\s,\tau]$ and the differential for the complex by the formula $$\partial(\s) = \sum_{\tau}  (-1)^{c(\s,\tau)} \partial_{\tau}(\s)$$ with the sum over all type $A$ sites $\tau$ in $\s.$  Here $c(\s,\tau)$ denotes the number of $A$-smoothings prior to the $A$-smoothing in $\s$ that is designated by $\tau.$ Priority is defined by an initial choice of order for the crossings in the knot or link diagram.

In Figure~\ref{states}, we indicate the original forms of the states for the bracket (not yet labeled by $1$ or $x$ to specify enhanced states) and their arrangement as a Khovanov category where  the generating morphisms are arrows from one state to another where the domain of the arrow has one more $A$-state than the target of that arrow. In this figure we have assigned an order to the crossings of the knot, and so the reader can see from it how to define the signs for each partial differential in the complex.

\begin{figure}[H]
     \begin{center}
     \begin{tabular}{c}
     \includegraphics[scale=.35]{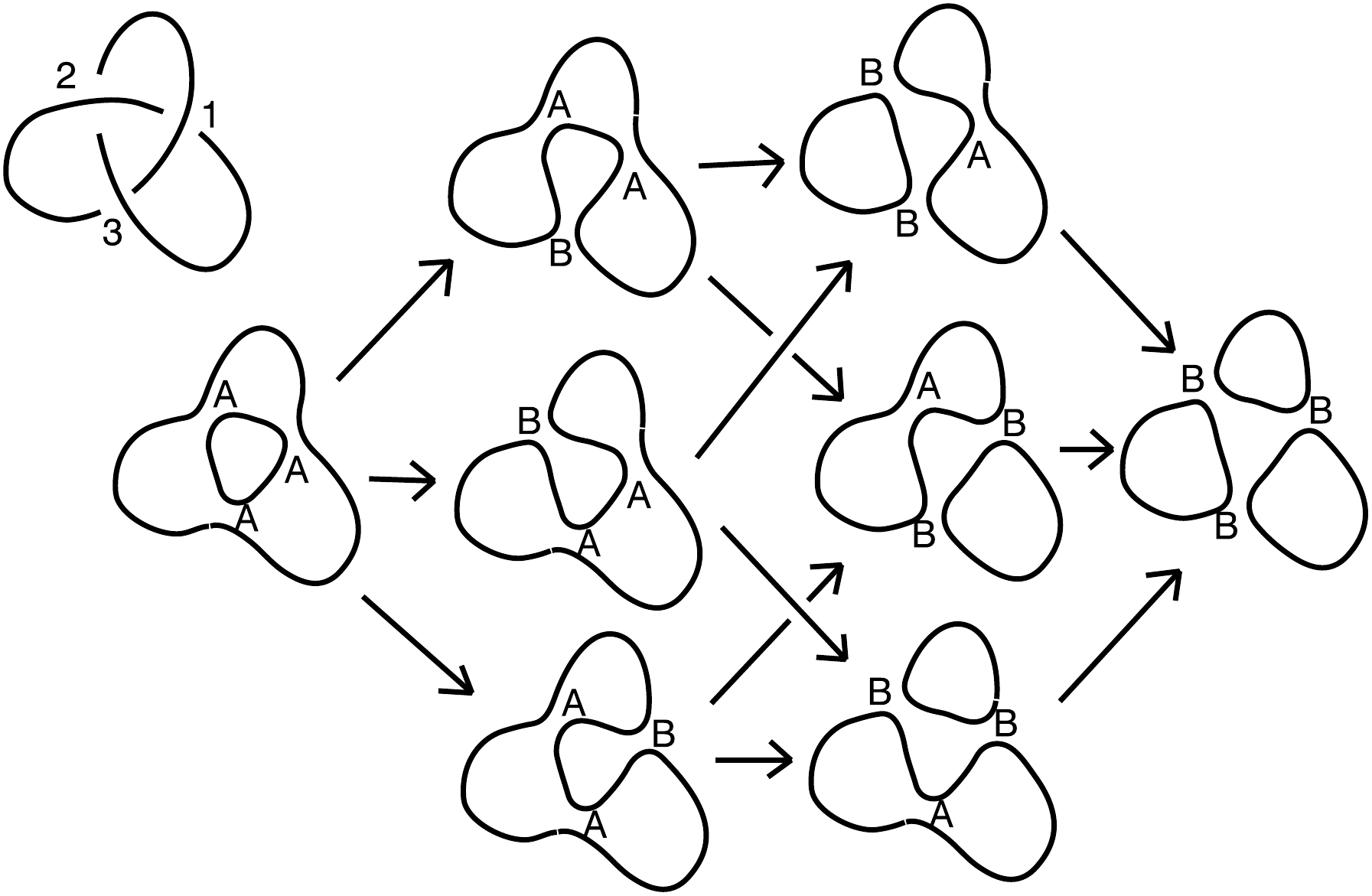}
     \end{tabular}
     \caption{Bracket states and Khovanov complex.}
    \label{states}
\end{center}
\end{figure}

We now explain how to define $\partial_{\tau}(\s)$ so that $j(\s)$ is preserved.  The  form of the partial differential can be described by the  following structure of multiplication and comultiplication on the algebra $V= k[x]/(x^{2})$ where $k = \BZ$ for integral coefficients:
\begin{enumerate}
\item The element $1$ is a multiplicative unit and $x^2 = 0$, and
\item $\Delta(1) = 1 \otimes x + x \otimes 1$ and $\Delta(x) = x \otimes x.$
\end{enumerate}
These rules describe the local relabeling process for loops in an enhanced state. Multiplication corresponds to the case where two loops merge to a single loop, while comultiplication corresponds to the case where one loop bifurcates into two loops. Thus,

\begin{prop} The partial differentials $\partial_{\tau}(\s)$ are uniquely determined by the condition that $j(\s') = j(\s)$ for all $\s'$ involved in the action of the partial differential on the enhanced state $\s.$  \label{prop:partial-determines-grading}
\end{prop}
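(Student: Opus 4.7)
The plan is to reduce the statement to a finite case analysis on how labels may change across a single resmoothing. Recall that $j(\mathfrak{s}) = n_B(\mathfrak{s}) + r(\mathfrak{s})$, where $r(\mathfrak{s})$ counts loops labeled $1$ minus loops labeled $x$. The partial differential $\partial_\tau$ replaces an $A$-smoothing at site $\tau$ with a $B$-smoothing, so $n_B$ increases by exactly $1$. Hence the requirement $j(\mathfrak{s}') = j(\mathfrak{s})$ is equivalent to $r(\mathfrak{s}') = r(\mathfrak{s}) - 1$. Because only loops touching $\tau$ are affected and relabeled, this net change of $-1$ in $r$ must be realized locally, i.e. among the one or two loops at $\tau$ before the resmoothing and the corresponding one or two loops after.

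Next I would split into the two topological cases for the resmoothing at $\tau$. In the \emph{merge} case (two loops become one), I would enumerate the four possible input labelings in $\{1, x\}^2$ and the two possible output labelings, and retain exactly those transitions whose net change in $r$ equals $-1$: namely $(1,1) \mapsto 1$, $(1,x) \mapsto x$, $(x,1) \mapsto x$, and no admissible target for $(x,x)$. This is precisely the multiplication rule of $V = k[x]/(x^2)$, with $1$ the unit and $x^2 = 0$. In the \emph{split} case (one loop becomes two), I would enumerate the two input and four output labelings, and retain those with $r$-change equal to $-1$: namely $1 \mapsto 1 \otimes x + x \otimes 1$ and $x \mapsto x \otimes x$, which is the comultiplication $\Delta$ stated in the proposition.

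Assembling the two cases, the set of enhanced states $\mathfrak{s}'$ that can appear in $\partial_\tau(\mathfrak{s})$ with a nonzero contribution is completely determined by the grading constraint, and the action on labels is forced to be the Frobenius algebra structure $(V, m, \Delta)$ described. Taking each admissible target with coefficient $1$ (the only normalization in which $\partial_\tau$ is well-defined independently of how we list the two loops at a merge or split) gives the unique such partial differential, establishing the proposition.

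The only real obstacle is bookkeeping: one must be careful that the $r$-change is tallied only over the loops actually created or destroyed at $\tau$ (loops untouched by the resmoothing contribute $0$ to the change), and that ``uniquely determined'' is understood in the sense of determining the support of $\partial_\tau$ together with the canonical unit-coefficient normalization. Beyond this, the argument is a direct enumeration with no global input, which is consistent with the local, site-by-site definition of the differential preceding the statement.
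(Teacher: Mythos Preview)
Your proposal is correct and follows the same approach the paper intends: the paper states the Frobenius algebra rules, remarks that multiplication and comultiplication correspond to merge and split, and then asserts the proposition with ``Thus,'' leaving the case analysis implicit. You have simply made that case analysis explicit, correctly reducing $j$-preservation to the local condition $r(\mathfrak{s}') = r(\mathfrak{s}) - 1$ and enumerating the admissible relabelings in the merge and split cases; your caveat about ``uniquely determined'' meaning the support together with unit coefficients is also the right reading of how the paper sets up $\partial_\tau$ as a sum over states in $S'[\mathfrak{s},\tau]$.
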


We briefly describe how to obtain the usual Khovanov homology from the bracket homology.  Let $\{b\}$ denote the degree shift operation that shifts the homogeneous component of a graded vector space in dimension $m$ up to dimension $m + b$.  Similarly, let $[a]$ denote the homological shift operation on chain complexes that shifts the $r$th vector space in a complex to the $(r + a)$th place, with all the differential maps shifted accordingly (cf. \cite{DB1}).  Given an orientation of the link $L$, the crossings in the diagram $D$ of $L$ can be assigned to be positive or negative in the usual way.  If $n_+$ and $n_-$ are the total number of positive and negative crossings (classical, not virtual), we can shift the gradings of the bracket homology $H(D)$ by $[-n_-]$ and $\{n_+-2n_-\}$.  Khovanov proved that this shifted bracket homology was an invariant of the oriented link:

\begin{thm}[Khovanov,\cite{Kho}] Let $D$ be a link diagram of an oriented link $L$. Then
$$Kh(L) \cong H(D)[-n_-]\{n_+-2n_-\}.$$ \label{theorem:khovanov-homology}
\end{thm}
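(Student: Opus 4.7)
The plan is to start from the bracket homology $H(D)$ constructed above, whose graded Euler characteristic equals $\langle D\rangle$, and verify that the shifted complex $C(D)[-n_-]\{n_+-2n_-\}$ is, up to chain homotopy equivalence, invariant under the three classical Reidemeister moves applied to the oriented diagram $D$. Once this invariance is established, the resulting bigraded homology depends only on the oriented link $L$, which gives the claimed isomorphism $Kh(L)\cong H(D)[-n_-]\{n_+-2n_-\}$. At the level of graded Euler characteristics the shifts promote $\langle D\rangle$ to $(-1)^{-n_-}q^{n_+-2n_-}\langle D\rangle = J_L(q)$, so the construction categorifies the oriented Jones polynomial, which doubles as a consistency check that $[-n_-]$ and $\{n_+-2n_-\}$ are the correct normalizations.

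First I would handle the first Reidemeister move. A direct computation of the bracket subcomplex associated to the single crossing in $\Rcurl$ (respectively $\Lcurl$) produces a two-term complex whose only nonzero differential involves multiplication or comultiplication in $V=k[x]/(x^2)$; a standard deformation retract exhibits this subcomplex as chain-homotopy equivalent to the one-state complex for $\Arc$, together with a specific bigrading shift. Because the right (respectively left) curl adds one positive (respectively negative) crossing to the diagram, the compensating shifts $[-n_-]\{n_+-2n_-\}$ cancel these bracket-level shifts exactly, establishing R1 invariance of the shifted complex.

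The main obstacle is the second Reidemeister move, whose two configurations differ by a four-vertex cube of resolutions in which a pair of generators has to be cancelled by Gauss elimination to expose the expected retract onto the two-strand complex. I would identify the partial differential $\partial_\tau$ between the generators at one vertex of the cube and those at an adjacent $A$-neighbour as an isomorphism, then perform the Gauss cancellation to exhibit the R2 bracket complex as chain-homotopy equivalent to the flat two-strand complex, up to the expected shifts. Since an R2 move creates or destroys one positive crossing together with one negative crossing, the shifts $[-n_-]\{n_+-2n_-\}$ are balanced under the move; the delicate sign bookkeeping arising from the ordering convention in $\partial=\sum_\tau (-1)^{c(\s,\tau)}\partial_\tau$ must be tracked carefully so that the cancelled component is genuinely an isomorphism with the correct sign.

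Finally, for the third Reidemeister move I would follow Bar-Natan's tangle-categorical approach (cf.\ \cite{DB2}): R2 invariance upgrades to a local equivalence of complexes of tangles, and applying it to one strand of the R3 configuration exhibits both sides of the R3 relation as chain-equivalent to a common flat tangle complex, so R3 follows formally. Combining the three invariances yields the stated isomorphism $Kh(L)\cong H(D)[-n_-]\{n_+-2n_-\}$.
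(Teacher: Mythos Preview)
Your outline is a reasonable sketch of the standard invariance argument for Khovanov homology, but you should be aware that the paper does not actually prove this theorem. It is stated as Khovanov's result with a citation to \cite{Kho}, and the paper explicitly remarks that ``the proof of invariance of Khovanov homology with respect to the Reidemeister moves (respecting grading changes) will not be given here. See \cite{Kho,DB1,DB2}.'' So there is no ``paper's own proof'' to compare against; the theorem functions here purely as background.

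That said, your sketch aligns with the approach in the cited references: R1 via a direct retract of the two-term local complex, R2 via Gaussian elimination on the four-state cube, and R3 deduced from R2 \`a la Bar-Natan's tangle formalism. The one place I would flag as needing more care is your R2 step: the vertex-to-vertex partial differential you want to cancel is not literally an isomorphism of the full summands but rather identifies a subcomplex that can be split off; the Gauss elimination must be set up so that the surviving piece is exactly the two-strand complex without residual correction terms. This is handled carefully in \cite{DB1,DB2}, and your phrasing (``identify the partial differential \ldots\ as an isomorphism'') slightly understates the work involved. Otherwise the plan is sound and matches the literature the paper defers to.
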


It is this form of the theory that generalizes nicely to an unoriented version of Khovanov homology (see \Cref{section:UnorientedKhovHom}).

\bigskip
There is much more that can be said about the nature of the construction of this section with respect to Frobenius algebras and tangle cobordisms. The partial boundaries can be conceptualized in terms of surface cobordisms. The equality of mixed partials corresponds to topological equivalence of the corresponding surface cobordisms, and to the relationships between Frobenius algebras and the surface cobordism category. The proof of invariance of Khovanov homology with respect to the Reidemeister moves (respecting grading changes) will not be given here. See \cite{Kho,DB1,DB2}. It is remarkable that this version of Khovanov homology is uniquely specified by natural ideas about adjacency of states in the bracket
polynomial.

\section{Bracket Homology of Virtual Links}
\label{section:bracket-homology-for-virtual-links}

In this section, we describe how to define and calculate Khovanov homology for virtual links for arbitrary coefficients.  This section utilizes techniques from \cite{ArbitraryCoeffs} and \cite{DKK}, but with conventions that are amenable to programming.  Such a program has been written, and will be described in a subsequent paper.

Extending Khovanov homology to virtual knots for arbitrary coefficients is complicated by the single cycle smoothing as depicted in the first row and second column of  \Cref{fig:problemsquare}.
We define a map for this smoothing $\eta:V \longrightarrow V$. In order to preserve the quantum grading, we must have that $\eta$ is the zero map.

Consider the following complex in Figure~\ref{fig:problemsquare} arising from the 2-crossing virtual unknot:
\begin{figure}[H]
\begin{center}
    \includegraphics[scale=.16]{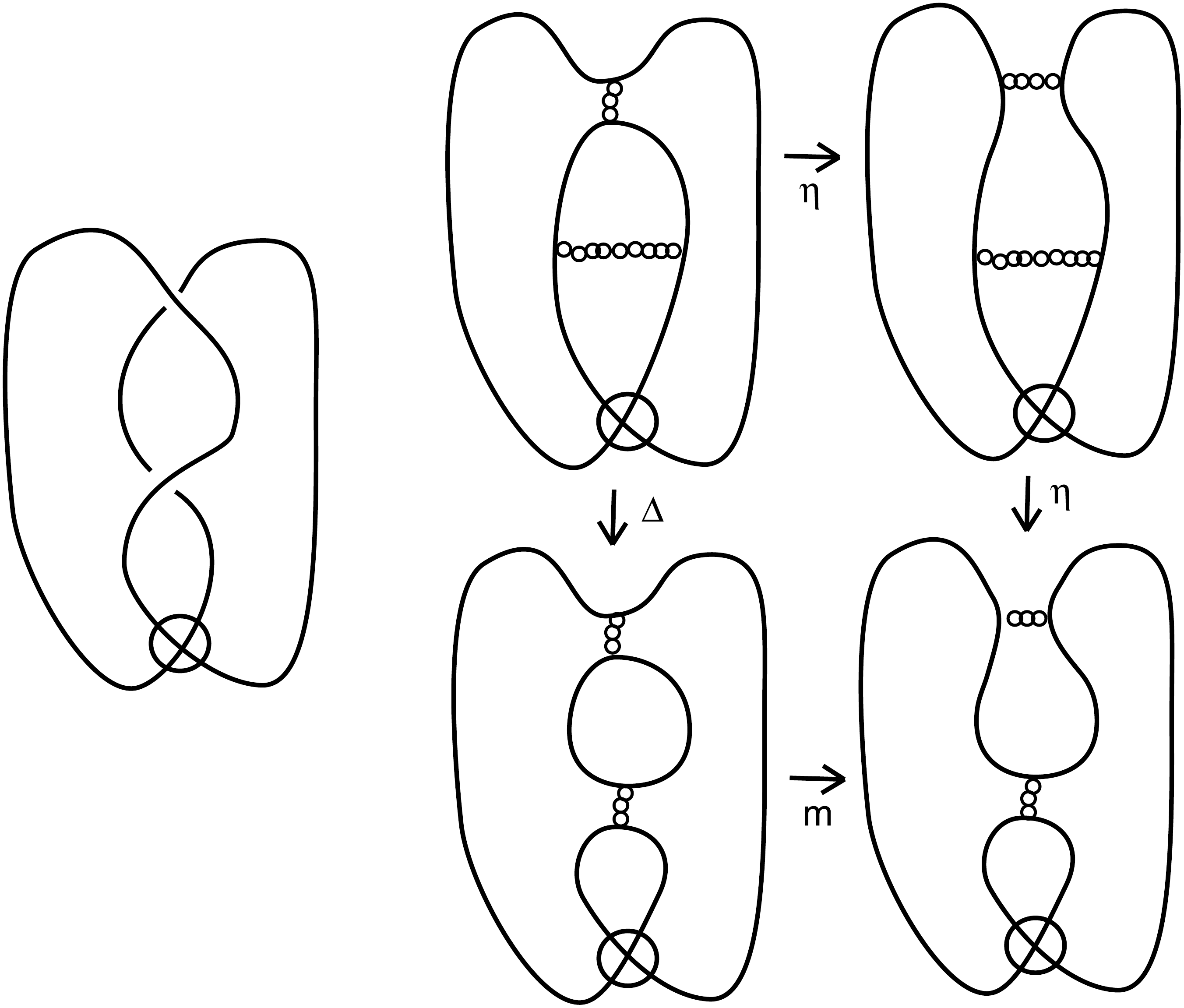}
    \caption{Khovanov complex for the two-crossing virtual unknot.}
\label{fig:problemsquare}
\end{center}
\end{figure}

Composing along the top and right we have $\eta \circ \eta  = 0.$  But composing along the opposite sides we see $$m \circ \Delta(1) = m(1 \otimes x + x \otimes 1) = x + x = 2x.$$ 
Hence the complex does not naturally commute or anti-commute.

When the base ring is $\BZ/2\BZ$ the definition of Khovanov homology given in the previous section goes through unchanged.  Manturov \cite{ArbitraryCoeffs} (see also\cite{ManturovKhovanovZ2},  \cite{ManturovVirtualKnots}) introduced a definition of Khovanov homology for (oriented) virtual knots with arbitrary coefficients.  Dye, Kaestner 
and Kauffman \cite{DKK} reformulated Manturov's definition and gave applications of this theory.  In particular, they found a generalization of the Rasmussen invariant and proved that virtual links with all positive crossings have a generalized four-ball genus equal to the genus of the virtual Seifert spanning surface. The method used in these papers to create an integral version of Khovanov homology involves using the properties of the virtual diagram to make corrections in the local boundary maps so that the individual squares commute.

Our formulation begins with a link diagram with labeled arcs $D$, see \Cref{fig:PD}.  The arcs of each component are labeled numerically in increasing order.  Arc labels change with each pass through a classical crossing.  If a component has only two arcs, stabilize with a Reidemeister 1 move to get more than two labeled arcs. This requirement ensures that the increasing order of the arc labels specifies an orientation of the component by traveling along it (see the right hand picture of \Cref{fig:PD}). Next, a 4-tuple is specified for each classical crossing by recording the arc label of the incoming under-crossing arc and then recording the remaining arc labels in order by going counterclockwise around the crossing. For example, the tuple for the top crossing in the diagram on the left of \Cref{fig:PD} is $(4,3,1,2)$. The set of all 4-tuples is called the {\em PD notation} for the link diagram with labeled arcs $D$ and is denoted $PD(D)$.  For example, the PD notation of the virtual trefoil diagram $vT$ on the left in \Cref{fig:PD} is $PD(vT)=\{ (4,3,1,2), (1,4,2,3)\}$.  For comparison, the PD notation for the link diagram with labeled arcs $D$ on the right is $PD(D)=\{(3,4,4,1),(1,6,2,5),(2,7,3,6),(7,5,8,8)\}$. For a  description of PD notation and the different formats used in the literature, see \cite{KnotAtlas,LinkInfo}.
 
\begin{figure}[H]
\begin{center}
    \includegraphics[scale=0.4]{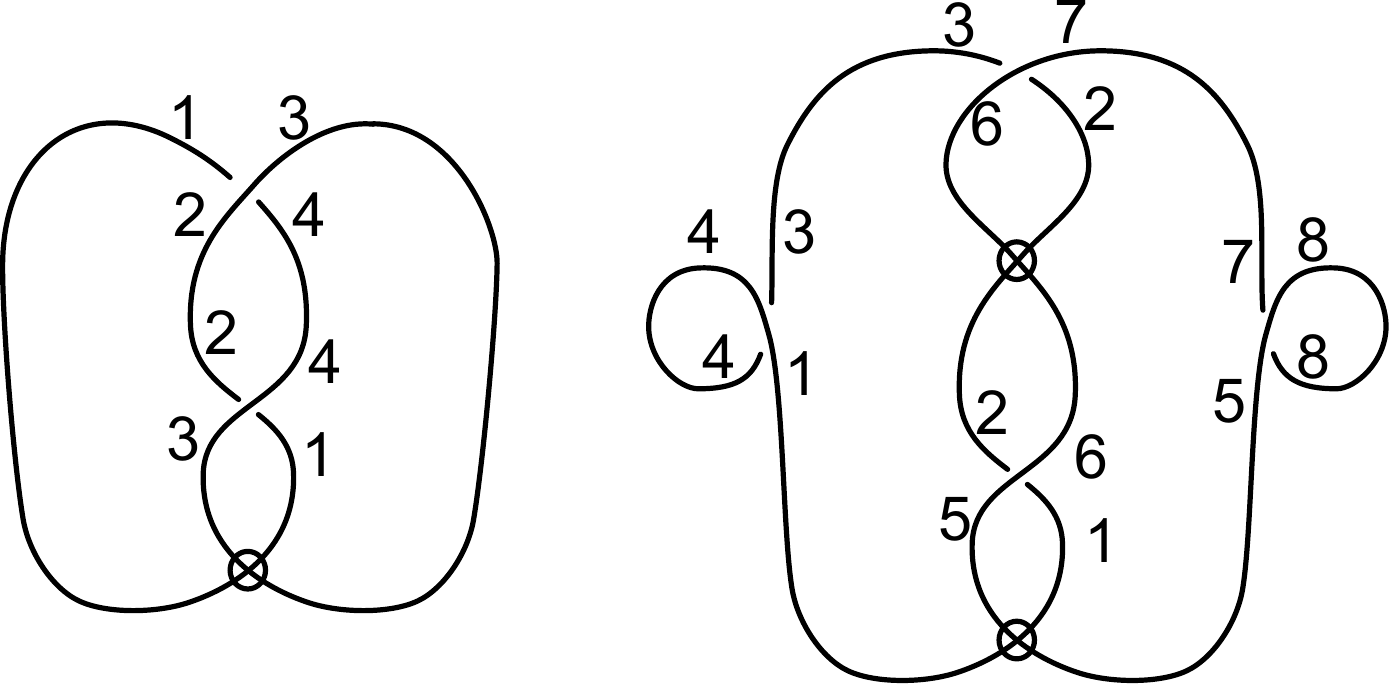}
    \caption{A virtual trefoil and a 2-component virtual link with labeled arcs.}

\label{fig:PD}
\end{center}
\end{figure}

\begin{figure}[H]
     \begin{center}
     \begin{tabular}{c}
     \includegraphics[scale=.8]{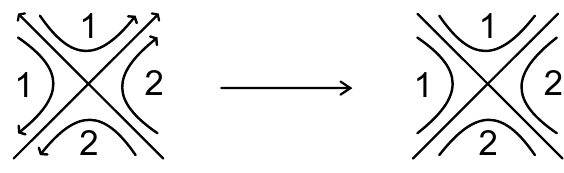}
     \end{tabular}
     \caption{The local order determined by the link orientation.}
    \label{fig:localOrder}
\end{center}
\end{figure}

\begin{figure}[H]
\begin{center}
    \includegraphics[width=.5\textwidth]{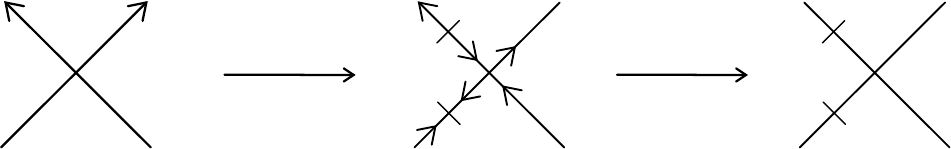}
    \caption{Inserting cut-points at the crossings.}
\label{fig:cutpoints}
\end{center}
\end{figure}

Recall that the formulation of Khovanov homology in \cite{DKK} and \cite{ArbitraryCoeffs} requires a global order for each state and a base-point for each circle in each state.  As we will briefly describe below, using PD notation of a link diagram, $PD(D)$, leads to a natural global order and choice of base point for each circle. Moreover, it also provides a convenient way to determine the locations of cut-points and sign conventions. \\

\noindent {\bf Base points:} Smoothing each classical crossing results in a merger of two pairs of arcs.  With each merger, we label each new arc with the smaller of the two labels.  For example, the zero smoothing of the crossing $(4,3,1,2)$ would result in two new arcs now labeled $1$ and $3$.   After smoothing all classical crossings we are left with a collection of immersed circles in the plane, each labeled by the smallest arc in that circle.  Since each circle in each state is made up of the labeled arcs from $PD(D)$, including the smallest labeled arc, we can place the base point of \cite{DKK} on each circle halfway along this smallest labeled arc. \\
 
\noindent {\bf Global Order:}  A global order is a choice of an ordering for each set of circles in a state. By the process described above for base points, each circle in each state has a label given by the smallest arc that forms that circle.  An ordering for the set of circles in a state is then given by ordering these labeled circles from least to greatest.\\

\noindent {\bf Sign convention for the hypercube: } We follow the approach of \cite{ArbitraryCoeffs} to sprinkle signs on maps that correspond to edges in the hypercube of states.  These signs ensure that each diagram corresponding to a face of the hypercube anti-commutes.  First, an {exterior algebra-like} object (cf. the ordered tensor product in \cite{ArbitraryCoeffs}) is created by taking the wedge product of the labeled circles given by the global order. For example, if a state has circles labeled $\{c_1, c_3, c_7, c_9\}$, then form the object $c_1\wedge c_3 \wedge c_7\wedge c_9$. The sign of the map  corresponding to, say, the merger of circles $c_3$ and $c_9$ in this example is determined by an application of the exterior algebra structure with a comparison to the local order of a crossing, as follows:  Count the number of transpositions to move each circle to the head of the global order to get $- c_3\wedge c_9\wedge c_1\wedge c_7$.  If circle $3$ and $9$ disagree with the local order (cf. \Cref{fig:localOrder}) we transpose them; otherwise we do nothing.  In this example, suppose $c_3$ and $c_9$ disagree with the local order. Thus, we get $c_9\wedge c_3\wedge c_1 \wedge c_7$.  The circles are then merged to form circle $c_3$ (since $3$ is the smaller label, this will be the corresponding labeled circle in the new state and will locate the base-point on that circle).  We then count the number of transpositions required to move $c_3$ to its position determined by the global order: $-c_1\wedge c_3\wedge c_7$.  The total number of transpositions used determines the sign to associate to the merger map $m$. In this example, the number of transpositions is five, so the merger map would be $-m$.

The process for a comultiplication  is similar.  \\

\noindent {\bf Creating a cut-system:}  As in \cite{DKK}, we place cut points on the diagram to create a source-sink orientation.  In order to avoid having to track this extra structure separately (in, say, a computer program), we place cut points at every classical crossing as shown in \Cref{fig:cutpoints}. We place them very near the crossing so as not to confuse where the base points are located with respect to the cut points.  This is  a well-defined cut-system since it is locally well-defined, and away from the classical crossings the source-sink orientation matches that of the orientation of the  link given by the arc labels in $PD(D)$.

\begin{rem}
With the cut-system, base-points and local/global orders so chosen, one may now delete the orientation as in the right hand pictures of  \Cref{fig:localOrder} and \Cref{fig:cutpoints}. From a diagrammatic perspective, an orientation is no longer required to compute the boundary maps as in \Cref{fig:problemsquare1}. However, as described in \Cref{quest:directly-from-diagram}, it is an open question whether or not this can be accomplished without reference to an orientation.
\end{rem}

\noindent {\bf Modifying the local boundary maps:} Algebra to be processed by the local boundary maps is placed initially at the basepoint of each circle in the state. It is then transferred to the site where the map occurs (e.g. joining two circles at that site or splitting one circle into two at that site).  Taking a path along the circle from this basepoint to the site, one will pass either an even number of cut points or an odd number.  If the parity  is even, then both $x$ and $1$ transport to $x$ and $1$ respectively.  If the parity is odd, then $x$ is transported to $-x$ and $1$ is transported to $1$.  The local boundary map is performed (using the signed map described above) on the algebra, and then in the image state, the resulting algebra is transported back to the base point(s).  As above, the path from the site back to the base point will pass through an even or odd number of cut points.  The sign of $x$ and $1$ at the base point is then determined by this parity according to the same rules above.

\begin{ex}
For the two-crossing virtual unknot, we show how these choices of global order, local order, base points, and cut point systems work together.  In Figure~\ref{fig:problemsquare1}, we have illustrated the situation where the top left state is labeled with $1$ and in the left vertical column we have $-\Delta(1) = -1 \otimes x - x \otimes 1.$ We show how the initial element $1$ appears at the base point of the upper left state and how it is transported (as a $1$) to the site for the co-multiplication. The result of the co-multiplication is  $-1 \otimes x - x \otimes 1$  and this is shown at the re-smoothed site. To perform the next local boundary map, we have to transport this algebra to a multiplication site. This result of the co-multiplication is to be transported back to basepoints and then to the new site of multiplication for the next composition of maps. In the figure we illustrate the transport just for $-x \otimes 1.$ At the new site this is transformed to  $x\otimes 1.$ Notice that the $(-x)$ in this transport moves through a single cut-point. We leave it for the reader to see that the transport of $-1 \otimes x$ has even parity for both elements of the tensor product. Thus $-1 \otimes x - x \otimes 1$ is transported to $-1 \otimes x + x \otimes 1$ at the multiplication site. Upon multiplying we have $m(-1 \otimes x + x \otimes 1) = -x + x = 0.$ Thus we now have that the composition of the left and bottom sides of the square is equal to the given zero composition of the right and top sides of the square (which is a composition to two zero single cycle maps). Note that in applying transport for composed maps we can transport directly from one site to another without going back to the base point and then to the second site.  

Note that in Figure~\ref{fig:problemsquare1} one could cancel the two cut points on the arc labeled 2 in the link diagram to simplify by-hand calculations.  However, it is advantageous to keep them in designing a computer program.
\end{ex}

\begin{figure}
\begin{center}
    \includegraphics[scale=.2]{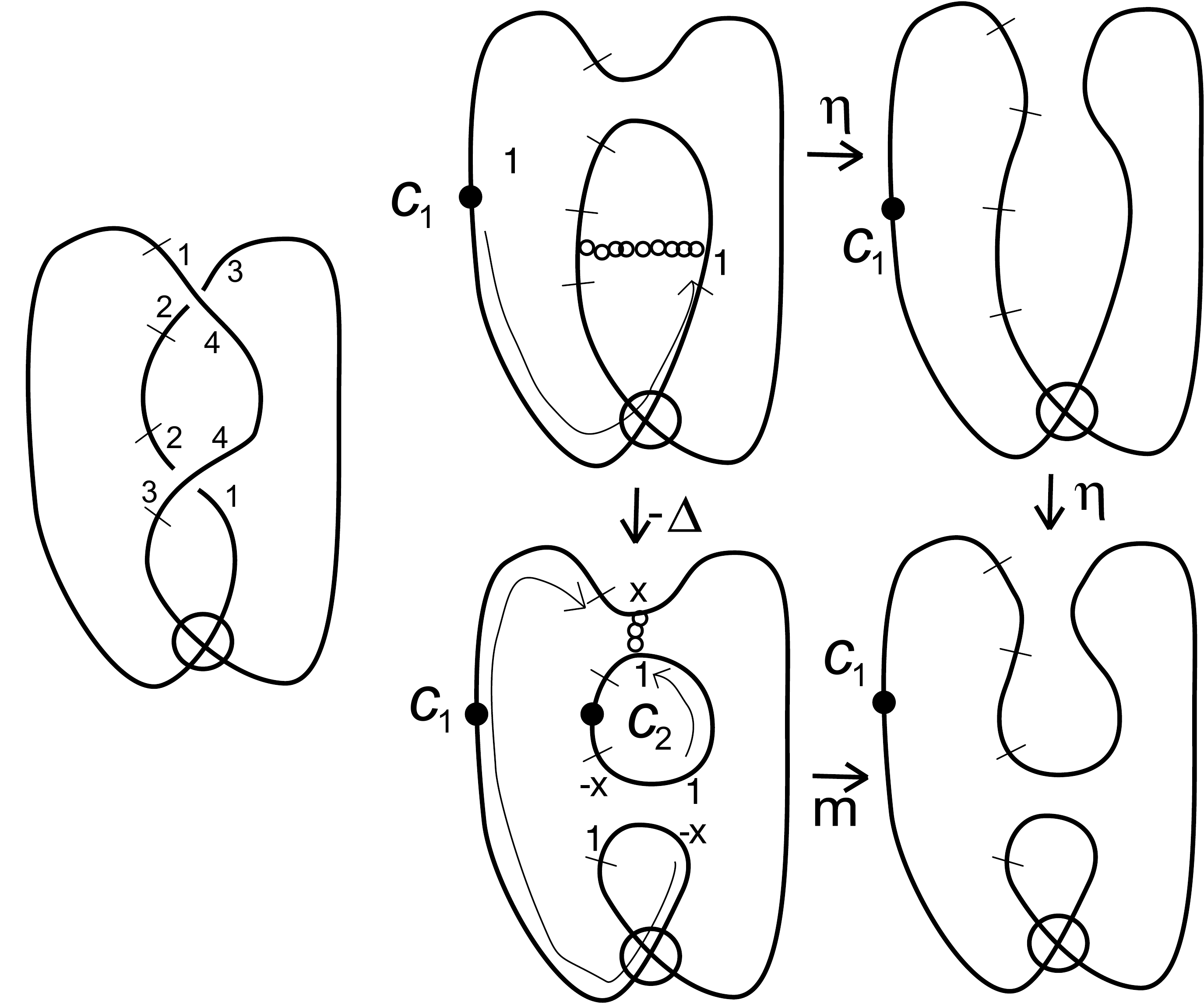}
    \caption{Computing the Khovanov complex for the two-crossing virtual unknot.}
\label{fig:problemsquare1}
\end{center}
\end{figure}

In summary, beginning with a link diagram given in PD notation,  the naming convention for each circle of each state provides a natural global ordering. A consequence of  Manturov's work in \cite{ArbitraryCoeffs} is that one could use any global order (paired with the local order) to specify signs on the maps in the hypercube of states so that each diagram anti-commutes.  This sign convention is different in general from the standard sign convention for Khovanov homology of classical links, but leads to isomorphic theories as explained in \cite{LipSar}.  The cut-system described above is potentially different than the one described in \cite{DKK}, but by \cite{Kamada4} it is cut-point equivalent to it.   A careful analysis of \cite{DKK} shows that the homology we obtain is invariant of the choices made. (For an example of this type of analysis, see \cite{Igor}.) Thus, we have proven:

\begin{thm} \label{thm:MDKK} Given two PD notations, $PD(D)$ and $PD(D')$, for the same  unoriented virtual link $L$, the bracket homology of each will be isomorphic up to a grading shift.  In particular, for some numbers $a$ and $b$, 
$$H(PD(D))\cong H(PD(D'))[a]\{b\}.$$
\end{thm}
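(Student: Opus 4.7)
The plan is to reduce invariance under change of PD notation to a composition of results already in the literature, and to verify that the specific structure our framework canonically extracts from a PD notation fits into the hypotheses of those results. First, I would observe that any passage from $PD(D)$ to $PD(D')$ decomposes into a finite sequence of moves of three kinds: (i) a pure relabeling of the arcs of a fixed diagram (changing the starting arc on a component, or reversing the induced traversal direction), (ii) a virtual Reidemeister move on the underlying diagram, and (iii) a classical Reidemeister move. Invariance up to a grading shift $[a]\{b\}$ for each move class would then compose to give the stated isomorphism.

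For class (i), relabeling alters the canonically induced base points, global order, and hypercube signs, but leaves the set of enhanced states and the local cut system unchanged. The key step is to invoke \cite{LipSar} to see that two global orders paired with the local order yield isomorphic signed complexes; the change of base point on a given circle is absorbed by an isomorphism that transports the algebra along the circle through the fixed cut points, using the Frobenius structure described in Section~\ref{section:bracket-homology-for-virtual-links}. A reversal of direction along a component changes the induced source–sink orientation and contributes a predictable grading shift, which is accounted for by $[a]\{b\}$.

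For classes (ii) and (iii), I would appeal to the Reidemeister invariance proofs of \cite{DKK}, after noting two compatibility facts. First, the cut system we canonically place at every classical crossing is cut-point equivalent, in the sense of \cite{Kamada4}, to the cut systems used in \cite{DKK}, so their local invariance arguments transport verbatim. Second, the sign convention induced by our PD-derived global order, although not the one used classically, is identified with the \cite{DKK} convention via \cite{LipSar}. The grading shifts that appear during R1 and mixed-strand R2 moves are absorbed into $[a]\{b\}$, exactly as in the normalization \(C(D)[-s_{-} - \tfrac12 m]\{s_{+}-2s_{-}-\tfrac12 m\}\) introduced earlier.

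The main obstacle is keeping the three layers of auxiliary data---base points, global order, and cut system---consistently synchronized as the PD notation changes, since a single classical R3 move can simultaneously permute arc labels, reshuffle the canonical order on the circles threading through the move, and relocate the canonical base points. The cleanest route is to factor every Reidemeister move into a diagrammatic move on the underlying link, to which \cite{DKK} applies, followed by a relabeling handled by step (i); an analogous bookkeeping argument is carried out in \cite{Igor}, and I would follow that template. Composing the resulting shift-isomorphisms along any sequence of moves connecting $PD(D)$ to $PD(D')$ then produces the desired $H(PD(D))\cong H(PD(D'))[a]\{b\}$.
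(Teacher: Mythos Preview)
Your proposal is correct and follows essentially the same route as the paper: both reduce the question to the independence of the auxiliary choices (global order, base points, cut system) by invoking \cite{LipSar} for sign conventions, \cite{Kamada4} for cut-point equivalence, and \cite{DKK} (with the bookkeeping model of \cite{Igor}) for Reidemeister invariance. Your explicit decomposition into relabeling, virtual, and classical moves is a cleaner organization of the same argument; one small over-caution is that a pure relabeling---including orientation reversal of a component---in fact induces no grading shift at all (the paper notes this just after the theorem), though allowing for a possible $[a]\{b\}$ there does no harm to the proof as stated.
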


The PD notation (as defined in this paper) automatically comes with an orientation since each component is made up of three or more numerically increasing labeled arcs.  Furthermore, bracket homologies are isomorphic up to a shift in gradings even if the orientation from each PD notation is different.

\begin{rem} \label{rem:MDKK} Given a link diagram in PD notation, $PD(D)$, for an oriented virtual link $L$, the PD notation specifies a data structure that includes base points, a global order, a cut point system, and a sign convention for each local boundary map.  This is the data structure needed for computer code to calculate the bracket homology of the diagram.  
\end{rem}

Analysis of the argument leading up to \Cref{thm:MDKK} shows that when $PD(D)$ and $PD(D')$ have the same underlying link diagram, but with different arc labels (possibly leading to different orientations), the homologies are isomorphic without any grading shift, i.e., $H(PD(D))\cong H(PD(D'))$.

Because the arcs can be labeled in an unoriented link diagram so as to induce any orientation of the link without changing the homology, the above says that the bracket homology is an invariant of the unoriented link diagram itself.  Thus, we may make the the following definition and work with link diagrams instead of PD notation from now on.

\begin{defn}
\label{defn:bracketHomDiag}
Given an unoriented link diagram D, label the arcs in order to produce a PD notation, $PD(D)$.  The chain complex and bracket homology of $D$ is defined to be the chain complex and bracket homology of $PD(D)$, respectively.  For example,
$$H(D) \coloneqq H(PD(D)).$$
\end{defn}

\medskip

Since the bracket homology is invariant under changes in the diagram's orientation,  \Cref{defn:bracketHomDiag} also makes sense if we begin with an oriented link diagram. In that case, the bracket homology theory we obtain is isomorphic to the Khovanov homology after an appropriate grading shift.

\begin{cor} \label{cor:MDKK} Let $D$ be a link diagram of an oriented virtual link $L$. If $n_+$ and $n_-$ are the number of positive and negative crossings respectively in $D$, then
$$H(D)[-n_-]\{n_+-2n_-\} \cong Kh(L).$$ 
\end{cor}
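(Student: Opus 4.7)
The plan is to deduce the corollary immediately from Theorem \ref{thm:MDKK} together with the standard definition of Khovanov homology for oriented virtual links. First, I would recall that the Khovanov homology $Kh(L)$ of an oriented virtual link is, by definition in \cite{ArbitraryCoeffs, DKK}, the homology of a grading-shifted bracket complex, where the shift is precisely $[-n_-]\{n_+-2n_-\}$. Their construction, however, uses the traditional diagrammatic data (source-sink orientations, a chosen cut system, base points, and a global order decorated on the picture) rather than the PD-notation-driven choices described in Section~\ref{section:bracket-homology-for-virtual-links}.

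Next, the substantive step is to invoke Theorem \ref{thm:MDKK}. Applied to two PD presentations of the same diagram $D$---namely the one produced by our arc-labeling procedure and the one implicit in any traditional diagrammatic presentation of $D$---the theorem produces an isomorphism of bracket homologies $H(PD(D))\cong H(D_{\mathrm{trad}})[a]\{b\}$ for some $a,b$. The argument establishing Theorem \ref{thm:MDKK} in fact shows more: the sign conventions (via \cite{LipSar}) and the cut-point systems (via \cite{Kamada4}) are chain-isomorphic refinements of the standard framework, and the choice of base points and global order are seen to not affect the isomorphism type (cf.\ \cite{Igor}). When the two PD presentations describe the same underlying oriented diagram, a close reading of that argument forces $a=0$ and $b=0$, as noted in the remark following Theorem \ref{thm:MDKK}.

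Then I would apply the grading shift $[-n_-]\{n_+-2n_-\}$ to both sides. Since $n_+$ and $n_-$ are determined by the oriented link $L$ (not by the framework used to compute the bracket homology), the same shift is valid in both pictures. On the PD-notation side, the shifted homology is $H(D)[-n_-]\{n_+-2n_-\}$; on the traditional side, the shifted homology is, by definition, $Kh(L)$. The chain-level isomorphism is compatible with the shift, so the two shifted homologies are isomorphic.

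The main obstacle in turning this sketch into a rigorous argument is the careful bookkeeping required to verify that the chain isomorphism underlying Theorem \ref{thm:MDKK} preserves both the homological and quantum gradings in the case where the two PD notations arise from the same oriented diagram, so that no residual grading shift remains before applying $[-n_-]\{n_+-2n_-\}$. This reduces to checking the transport formulas in the PD framework against the boundary maps in \cite{DKK}: the global order and base points induced by arc labels correspond to a permissible choice in the sense of \cite{LipSar}, and the cut system at every crossing is cut-point equivalent to the DKK cut system by \cite{Kamada4}. Once these identifications are made, the corollary follows.
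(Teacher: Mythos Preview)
Your proposal is correct and matches the paper's approach: the corollary is treated as an immediate consequence of the discussion establishing Theorem~\ref{thm:MDKK}, namely that the PD-based choices (global order, base points, cut system, signs) are equivalent to the diagrammatic choices in \cite{ArbitraryCoeffs,DKK} via \cite{LipSar} and \cite{Kamada4}, so that for the same oriented diagram the bracket homologies coincide without grading shift, and applying $[-n_-]\{n_+-2n_-\}$ then yields $Kh(L)$ by definition. The paper does not spell out a separate proof beyond this, so your sketch is essentially what the paper intends.
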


Note the similarity between \Cref{cor:MDKK} and \Cref{theorem:khovanov-homology}.  The bracket homology for virtual links defined in this section is a natural extension of the bracket homology for classical links described in \Cref{Khovanov-Homology}.  In the classical case, the two homologies are isomorphic.  Other definitions for virtual Khovanov homology have been given by \cite{Tubbenhauer} and \cite{Rushworth}. Each of these definitions give different solutions to handling the difficult diagram $m\circ \Delta = \eta \circ \eta$ discussed above.  

\begin{rem}While we focus above on invariance under changes of arc labelings and orientations, it should be said that the PD notation expresses each virtual link entirely without mention of virtual crossings. This is appropriate since the virtual crossings are an artifact of the planar representation of a virtual link. This means that our definition of bracket homology for virtual links, expressed in PD notation is also independent of the pattern of virtual crossings in any given planar diagram. Nevertheless, an abstract link diagram (a surface supporting the link, see \cite{DKK}) can be formed from the data in the PD notation and the genus of it can be calculated directly from this data.

\end{rem}

\section{Unoriented Khovanov Homology}
\label{section:UnorientedKhovHom}

We are now ready to describe the unoriented Khovanov homology in terms of the bracket homology and a grading shift.  Let $L$ be a virtual link and $D$ a diagram of the link (for use in a computer program, $D$ would be given in PD notation).  Using \Cref{thm:MDKK} and \Cref{defn:bracketHomDiag}, the bracket homology $H(D)$ is an invariant of the link up to grading shifts, i.e., given any two diagrams $D_1$ and $D_2$ of the same link, there are numbers $a$ and $b$ such that  $H(D_1) \cong H(D_2)[a]\{b\}$.  While the bracket homology requires the choice of an orientation for computation (though it is independent of that choice), the grading shifts $(-s_--\frac12 m)$ and $(s_+-2s_- - \frac12 m)$ needed to define the unoriented Khovanov homology do not require such a choice (see the Introduction and \Cref{sec:unoriented-jones-poly-classical}).  

 For a diagram $D$ of a link $L$, shift the bracket chain complex $(C(D), \partial)$ to get the {\em unoriented  Khovanov chain complex}: 
 \begin{equation}
 \tilde{C}(D)=C(D)[-s_- - \frac12 m]\{s_+ - 2s_- - \frac12 m \}. \label{eq:bracket-chain-complex}
 \end{equation}
We do not include $\lambda$, as $\lambda$ can jump by an even integer by choosing a different orientation. Let $\widetilde{Kh}(D)$ be the homology of this chain complex.

\begin{thm} Let $L$ be an unoriented virtual link, and $D_1$ and $D_2$ be two diagrams of $L$ that are equivalent except they differ by one virtual Reidemeister move (virtual or classical).  Then $\widetilde{Kh}(D_1)\cong \widetilde{Kh}(D_2)$.   \label{thm:unoriented-link-invariant-homology}
\end{thm}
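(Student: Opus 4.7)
The plan is to reduce \Cref{thm:unoriented-link-invariant-homology} to the invariance of oriented Khovanov homology recorded in \Cref{cor:MDKK}. The key observation is that the unoriented shift $[-s_- - \tfrac12 m]\{s_+ - 2s_- - \tfrac12 m\}$ differs from the oriented shift $[-n_-]\{n_+ - 2n_-\}$ only by a grading change that depends on the link invariant $\lambda = \sum_{i<j} Lk(K_i,K_j)$.

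First, I fix an orientation of $D_1$ and propagate it through the Reidemeister move to obtain an orientation of $D_2$; this is always possible since each move (classical or virtual) is compatible with every orientation of the strands involved. Thus $D_1$ and $D_2$ become oriented diagrams for a common oriented virtual link $L$. By \Cref{cor:MDKK}, $H(D_i)[-n_-^{(i)}]\{n_+^{(i)} - 2n_-^{(i)}\} \cong Kh(L)$ for $i = 1,2$, and rewriting the unoriented homology in this form yields
$$\widetilde{Kh}(D_i) \;\cong\; Kh(L)\bigl[n_-^{(i)}-s_-^{(i)}-\tfrac12 m^{(i)}\bigr]\bigl\{-(n_+^{(i)}-2n_-^{(i)})+s_+^{(i)}-2s_-^{(i)}-\tfrac12 m^{(i)}\bigr\}.$$

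Next, I apply \Cref{eq:n_-}, i.e.\ $n_- = -\lambda + s_- + \tfrac12 m$, together with its elementary consequence $n_+ - 2n_- = s_+ - 2s_- - \tfrac12 m + 3\lambda$. Both of the shifts above collapse to expressions depending only on $\lambda$, so
$$\widetilde{Kh}(D_i) \;\cong\; Kh(L)[-\lambda_i]\{-3\lambda_i\}.$$
Since $\lambda$ is an oriented virtual link invariant, $\lambda_1 = \lambda_2$, and the desired isomorphism $\widetilde{Kh}(D_1) \cong \widetilde{Kh}(D_2)$ follows. For virtual Reidemeister moves none of $s_\pm$, $m$, or $H(D)$ changes, so the conclusion is immediate in that case; the substantive content is the classical moves, with R2 involving two mixed crossings being the most delicate.

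The main obstacle I anticipate is justifying \Cref{eq:n_-} for virtual links, where $\lambda$ and $\tfrac12 m$ may each be a half-integer. This reduces to verifying that the signed count of mixed crossings between any two components equals twice their (possibly half-integer) linking number, and to checking that the fractional parts of $\lambda$ and $\tfrac12 m$ always cancel so that $n_-$ remains an integer. Once that identity is in place, the proof is routine bookkeeping of grading shifts.
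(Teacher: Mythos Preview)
Your argument is correct, and it takes a genuinely different route from the paper's proof.  The paper proceeds move by move: for virtual moves nothing changes; for R1 and R2 on a single component the shifts $-s_-$ and $s_+-2s_-$ behave exactly like $-n_-$ and $n_+-2n_-$; for R2 between distinct components the paper computes directly that $H(D_1)\cong H(D_2)[1]\{1\}$ and observes that the loss of two mixed crossings makes $-\tfrac12 m$ absorb this shift; R3 changes nothing.  Your approach bypasses this case analysis by rewriting the unoriented shift as the oriented shift plus a correction depending only on $\lambda$, and then invoking the fact that $\lambda$ is an oriented link invariant.  This is slicker and explains \emph{why} the particular combination $-s_--\tfrac12 m$ works: it differs from $-n_-$ by a link invariant, so any choice of the form $-n_- + (\text{link invariant})$ would do.  The paper's route, on the other hand, never needs to name $\lambda$ or justify \Cref{eq:n_-} in the virtual setting, so it is marginally more self-contained.

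Regarding the obstacle you flag: it is not a real difficulty.  With $Lk(K_i,K_j)$ taken as one half the signed count of mixed crossings between $K_i$ and $K_j$ (the convention implicit in the paper), writing $m_+$ and $m_-$ for the numbers of positive and negative mixed crossings gives $2\lambda = m_+ - m_-$ and $m = m_+ + m_-$, hence $m_- = \tfrac12 m - \lambda$ and $n_- = s_- + m_- = -\lambda + s_- + \tfrac12 m$ on the nose.  This is a formal identity valid for any virtual diagram, and the integrality of $n_-$ is automatic since $m_-$ is a nonnegative integer.
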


\begin{proof} For a diagram $D$ of a virtual link $L$, the homology of the unoriented Khovanov complex $\tilde{C}(D)$ is equal to $H(D)[-s_- - \frac12 m]\{s_+ - 2s_- - \frac12 m\}$.  Since $H(D_1)$ and $H(D_2)$ are isomorphic up to a gradings shift (cf. \Cref{thm:MDKK} and \Cref{defn:bracketHomDiag}), we only need to show that if $\widetilde{Kh}(D_1)\cong \widetilde{Kh}(D_2)[a]\{b\}$, then $a=b=0$.  Since $D_1$ and $D_2$ differ by a single Reidemeister move, we check each type of move.  Virtual moves V1, V2, V3 and VM (cf. \Cref{Fig:vrms}) do not effect the enhanced states, $s_+$, $s_-$, or $m$.  Hence $\widetilde{Kh}(D_1)=\widetilde{Kh}(D_2)$ in that case.  For the first classical Reidemeister move and the second classical Reidemeister move performed on the same component, the terms $-s_-$ and $s_+-2s_-$ shift the bracket homology by the same number as in the proof of the oriented Khovanov homology.  For the second classical Reidemeister move performed between two different components,  $H(D_1) \cong H(D_2)[1]\{1\}$ for the move that removes two mixed-crossings in $D_1$.  In this case, the term $-\frac12 m$ in both grading shifts compensates for this change in grading.  Finally, the third classical Reidemeister move does not change $s_+, s_-$ or $m$, and it induces an isomorphism $H(D_1)\cong H(D_2)$.  In each case, $a=b=0$, which was to be shown.
\end{proof}

A corollary of  this theorem is that $\widetilde{Kh}(D)$ is isomorphic for every diagram $D$ of $L$.  Thus, $\widetilde{Kh}(D)$ is an invariant of $L$.  Therefore, we  define:

\begin{defn} Let $L$ be an unoriented virtual link and $D$ be a diagram of $L$.  The {\em unoriented  Khovanov homology of $L$}, denoted $\widetilde{Kh}(L)$, is the homology of the complex $\tilde{C}(D)$. 
\end{defn}

\begin{rem} The gradings for the chain complex $(C(D), \partial)$ are always integer valued, but the gradings of the shifted unoriented Khovanov chain complex $(\tilde{C}(D),\partial)$ may be half-integer valued.     
\end{rem}

For classical links, the gradings of $\tilde{C}(D)$ are always integer valued---there are always an even number of mixed-crossings in a classical link, thus $\frac12 m$ is always an integer.  One might conjecture that the same is true for even links since each component has an even number of mixed-crossings.  This is not true, however, as the even virtualized Borromean rings in \Cref{fig:borromean-rings} shows:

\begin{figure}[H]
\begin{center}
    \includegraphics[scale=.8]{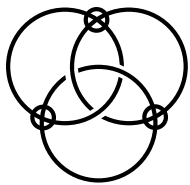}
    \caption{An even virtual Borromean Rings with an odd number of mixed-crossings.}
\label{fig:borromean-rings}
\end{center}
\end{figure}

Since the homology $\widetilde{Kh}(L)$ can be graded by half-integers, we must extend the usual integral grading to the additive group $\frac12\BZ$.   The graded Euler characteristic for unoriented Khovanov homology is then $$\chi_q(\widetilde{Kh}(L)) = \sum_{i,j\in \frac12\BZ} (-1)^i q^j \dim(\widetilde{Kh}^{i,j}(L)).$$

Everything in the formula above continues to make sense if we choose the standard square root of $-1$, i.e., $(-1)^\frac12=  \rm{i}$. The graded Euler characteristic of the unoriented Khovanov homology is a polynomial that may have imaginary valued coefficients.  Therefore, evaluating the graded Euler characteristic at $1$ is of the form ${\rm i}^k\cdot 2^\ell$ for an even virtual link and $0$ for an odd link.  We could have defined the unoriented Jones polynomial as this graded Euler characteristic, which would yield $J_L^0$ (cf. \Cref{eqn:J0}).  For some purposes this could be a reasonable thing to do.  However, normalizing the polynomial so that it evaluates to $2^\ell$ (even) or $0$ (odd) is desirable from the standpoint of matching and generalizing already known theorems in classical link theory. The main motivation behind working with the core and mantle of \Cref{Subsec:EvenCore} was to establish an overall normalization that makes the unoriented Jones polynomial have integer-valued coefficients and evaluates like the oriented Jones polynomial.  It is the reason for the extra $(-1)^{\tilde{\lambda}}$ in \Cref{eqn:unorientedVirtualJones}.  Thus, up to a well defined ``sign,'' the unoriented Khovanov homology categorifies the unoriented Jones polynomial:

\begin{thm}
Let $L$ be a virtual link.  Then 
$$\tilde{J}_L(q) = (-1)^{\tilde{\lambda}}\chi_q(\widetilde{Kh}(L)),$$
where $\tilde{\lambda}\in \frac12 \BZ$. \label{thm:graded-Euler-char}
\end{thm}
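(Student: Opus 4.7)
The plan is to compute $\chi_q(\widetilde{Kh}(L))$ directly from the definition of the unoriented Khovanov chain complex and compare the result to the formula defining $\widetilde{J}_L(q)$. The key input from the preceding section is that, because the Khovanov differential preserves the $q$-grading (\Cref{prop:partial-determines-grading}), the graded Euler characteristic of the bracket complex agrees with the graded Euler characteristic of its homology, and both equal the Kauffman bracket:
\begin{equation*}
\chi_q(H(D)) \;=\; \sum_{i,j} (-1)^i q^j \dim H^{i,j}(D) \;=\; \sum_{i,j} (-1)^i q^j \dim C^{i,j}(D) \;=\; \langle L\rangle.
\end{equation*}

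First I would unwind the grading shifts in \Cref{eq:bracket-chain-complex}. Writing $\tilde{C}^{i,j}(D) = C^{i + s_- + \tfrac12 m,\; j - s_+ + 2s_- + \tfrac12 m}(D)$ and re-indexing the Euler characteristic sum yields
\begin{equation*}
\chi_q(\widetilde{Kh}(L)) \;=\; (-1)^{-s_- - \tfrac12 m}\; q^{\,s_+ - 2s_- - \tfrac12 m}\; \langle L\rangle,
\end{equation*}
with the convention $(-1)^{1/2} = \mathrm{i}$ used throughout. Multiplying both sides by $(-1)^{\tilde{\lambda}}$ and comparing with \Cref{eqn:unorientedVirtualJones} gives precisely $\tilde{J}_L(q)$, provided one may freely combine the exponents of $-1$.

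The one genuinely subtle point (and the step I expect to require the most care) is verifying that exponent arithmetic for $-1$ is well-behaved when $\tilde\lambda$ and $\tfrac12 m$ are half-integers. Taking $(-1)^a = e^{\mathrm{i}\pi a}$ throughout shows that $(-1)^a (-1)^b = (-1)^{a+b}$ as complex numbers for all real $a,b$, so the identity
\begin{equation*}
(-1)^{\tilde{\lambda}} \cdot (-1)^{-s_- - \tfrac12 m} \;=\; (-1)^{\tilde{\lambda} - s_- - \tfrac12 m}
\end{equation*}
holds without ambiguity. Once this is in place, the equality $\tilde{J}_L(q) = (-1)^{\tilde{\lambda}} \chi_q(\widetilde{Kh}(L))$ is immediate. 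The final assertion $\tilde{\lambda} \in \tfrac12\BZ$ is already established by \Cref{lem:modLinkingNumber} together with the definition \Cref{eqn:lambda}, so no additional argument is required.
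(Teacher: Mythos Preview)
Your argument is correct and is essentially the same as the paper's: the paper does not give an explicit proof of this theorem but presents it as an immediate consequence of the observation that $\chi_q(\widetilde{Kh}(L))$ equals $J_L^0(q)=(-1)^{-s_--\frac12 m}q^{s_+-2s_--\frac12 m}\langle L\rangle$, so that multiplying by $(-1)^{\tilde\lambda}$ recovers $\widetilde{J}_L$. Your write-up makes the grading-shift computation and the half-integer exponent arithmetic explicit, which the paper leaves to the reader.
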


The complex number $(-1)^{\tilde{\lambda}}$, i.e., the ``sign,'' is calculated by choosing an orientation of the virtual link diagram of $L$, but once computed, the result is independent of the choice of that orientation by \Cref{thm:lambdaInvariant3}.  

\begin{rem}
The correction provided by $(-1)^{\tilde{\lambda}}$ could be incorporated into the homological grading of $\tilde{C}(D)$.  Define a function $\tilde{l}$ of $\tilde{\lambda}$ to the set  $\{0, \frac12, 1, \frac32\}$ by
$$ \tilde{l} = \left\{ \begin{array}{cc} 
0 & \mbox{\rm  if } 2\tilde{\lambda} \equiv 0  \ (\mbox{\rm mod } 4)\\
\frac12 & \mbox{\rm if } 2 \tilde{\lambda} \equiv 1\ (\mbox{\rm mod } 4)\\
1 & \mbox{\rm if } 2 \tilde{\lambda} \equiv 2\ (\mbox{\rm mod } 4)\\
\frac32 & \mbox{\rm if } 2 \tilde{\lambda} \equiv 3 \ (\mbox{\rm mod } 4)\\
\end{array}\right.$$
The value $\tilde{l}$ is the same number for any orientation by \Cref{thm:lambdaInvariant3}. Replacing $(-s_- - \frac12 m)$ with $(\tilde{l} -s_- -\frac12m)$ in \Cref{eq:bracket-chain-complex} gives an unoriented Khovanov homology whose graded Euler characteristic is  $\tilde{J}_L$.   
\end{rem}

 \section{Lee Homology of Unoriented Links}
 
Lee \cite{Lee} makes another homological invariant of knots and links by using a different
Frobenius algebra. She takes the algebra ${\mathcal A} = k[x]/(x^{2} -1)$  with
$$x^2 = 1,$$
$$\Delta(1) = 1 \otimes x + x \otimes 1,$$
$$\Delta(x) = x \otimes x + 1 \otimes 1,$$
$$\epsilon(x) = 1,$$
$$\epsilon(1) = 0.$$
This can be used to define a differential $\partial'$ and a link homology theory that is distinct from Khovanov homology. In this theory, the quantum
grading $j$ is not preserved, but  {\em one can  use $j$ to filter the chain complex for the Lee homology.} The result is a spectral sequence that starts from Khovanov homology and converges to Lee homology.

We can extend Lee's Frobenius algebra to virtual links to get a bracket complex for Lee theory as follows.  The involution defined in \Cref{section:bracket-homology-for-virtual-links} that takes $x \mapsto -x$ as it is transported through a cut point leads to a well-defined, filtered bracket chain complex, $(C'(D),\partial')$, for the algebra $\mathcal{A}$.   After shifting overall by the unoriented gradings-shifts presented in this paper, we get a Lee theory for a link that does not require a choice of orientation to define the homology:

\begin{thm}
\label{MainThm:LeeHomology}
Let $L$ be an unoriented virtual link and $D$ be any virtual diagram of $L$.  The {\em unoriented Lee Homology} $Kh'(L)$, i.e.,  the homology of the chain complex $$(C'(D)[-s_- -\frac12m]\{s_+-2s_- - \frac12m\}, \partial'),$$
is an invariant of the link $L$.
\end{thm}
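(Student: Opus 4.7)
The plan is to mirror the proof of \Cref{thm:unoriented-link-invariant-homology} step-by-step, with the Khovanov Frobenius algebra $V = k[x]/(x^2)$ replaced throughout by Lee's algebra $\mathcal{A} = k[x]/(x^2 - 1)$. The key point is that Lee's differential $\partial'$ decomposes as $\partial' = \partial + \Phi$, where $\partial$ is the bracket differential from \Cref{section:bracket-homology-for-virtual-links} and $\Phi$ is the additional term coming from $\Delta(x) = x \otimes x + 1 \otimes 1$ and $x^2 = 1$; the piece $\Phi$ strictly lowers the $q$-grading, while $\partial$ preserves it. Hence $\partial'$ does not preserve the $q$-grading but does respect the descending filtration $\mathcal{F}^j C'(D) = \bigoplus_{k \geq j} C'^{\bullet,k}(D)$, and the homological grading is still preserved.

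First, I would establish the Lee analog of \Cref{thm:MDKK}: given two PD notations $PD(D)$ and $PD(D')$ for the same unoriented virtual link $L$, the Lee bracket homologies $H'(PD(D))$ and $H'(PD(D'))$ are isomorphic up to a shift in homological grading and a shift of the $q$-filtration. This is where the involution $x \mapsto -x$ at cut points does the real work: one verifies that with the signed transport rule specified in \Cref{section:bracket-homology-for-virtual-links}, every face of the hypercube of states with differential $\partial'$ still anticommutes. The anticommutation check is local; for each pair of adjacent sites one checks the two compositions, and the four Lee operations (merge with $1\cdot 1$, $1\cdot x$, $x\cdot x$; comultiplications on $1$ and on $x$) all transport consistently through the chosen cut system. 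Then the same chain of comparisons used by Manturov, Lipshitz--Sarkar, Dye/Kaestner/Kauffman, and Kamada that underwrites \Cref{thm:MDKK} applies verbatim, establishing independence of arc labeling (and hence orientation) up to the advertised grading shift.

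Second, I would check Reidemeister invariance of the shifted complex $(C'(D)[-s_- - \tfrac{1}{2}m]\{s_+ - 2s_- - \tfrac{1}{2}m\}, \partial')$ for each of the moves in \Cref{Fig:vrms}. The virtual moves V1, V2, V3, and VM affect neither $s_\pm$, $m$, nor the enhanced states and their boundary maps, so the complex (hence its homology) is unchanged. For the classical R1 and same-component R2 moves, the effect on the Lee bracket complex is the same as in Lee's original argument in the classical oriented setting up to a shift by $[-\Delta s_-]\{\Delta s_+ - 2\Delta s_-\}$, which exactly matches the change in the grading shifts $[-s_- - \tfrac{1}{2}m]\{s_+ - 2s_- - \tfrac{1}{2}m\}$ since $m$ does not change. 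For an R2 move between different components, $m$ decreases by two while $s_+, s_-$ are unchanged, and the Lee analog of the standard R2 chain homotopy produces the shift $[1]\{1\}$ that is precisely compensated by the two $-\tfrac{1}{2}m$ terms. The R3 move leaves $s_+, s_-, m$ unchanged and induces an isomorphism of complexes by the standard argument.

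I expect the main obstacle to be item one: verifying that the Lee face anticommutation holds with the sign-convention and cut-point machinery from \Cref{section:bracket-homology-for-virtual-links}. Unlike the Khovanov case, where only the grading-preserving piece needs to be analyzed, here one must also check the interaction between the filtration-lowering terms in $\Phi$ and the $x \mapsto -x$ transport through cut points; in particular the problematic square $m \circ \Delta$ from \Cref{fig:problemsquare} now also produces $1\otimes 1$ and $x\otimes x$ contributions from the Lee formulas, and each of these must cancel after applying the cut-point signs. This is a finite case analysis over the local configurations, and it is the analog (for the Lee differential) of the computation carried out for $\partial$ in \Cref{fig:problemsquare1}. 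Once it is in place, the rest of the argument is a direct transcription of the proof of \Cref{thm:unoriented-link-invariant-homology}, and invariance of $Kh'(L)$ follows.
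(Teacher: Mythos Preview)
Your proposal is correct and is in fact considerably more detailed than what the paper offers. The paper does not give an explicit proof of this theorem: it simply asserts that the cut-point involution $x\mapsto -x$ yields a well-defined filtered bracket complex $(C'(D),\partial')$ for Lee's algebra $\mathcal{A}$, states the theorem, and then writes ``In \cite{DKK}, Lee homology was generalized to virtual knots and links. Applications of it to unoriented links can be articulated again with the methods of the present paper. We will carry this out in detail in a future paper.'' In other words, the paper treats the result as a direct transcription of the argument for \Cref{thm:unoriented-link-invariant-homology}, with the virtual Lee construction outsourced to \cite{DKK}, and defers the details.

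Your outline is exactly that transcription, carried out with appropriate care about the filtration (since $\partial'$ only respects the descending $q$-filtration, not the $q$-grading) and about the anticommutation of the problematic face under the Lee operations. The one place where you go beyond the paper is in flagging the need to recheck the square of \Cref{fig:problemsquare} for the additional Lee terms $x^2=1$ and the $1\otimes 1$ summand of $\Delta(x)$; the paper does not do this explicitly here, relying instead on \cite{DKK}. So your approach and the paper's (implicit) approach coincide, with yours being the honest version.
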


The usual (oriented) Lee homology is simple for classical links. One has that the dimension of the Lee homology is equal to 
$2^{\ell}$ where $\ell$ is the number of components of the link $L$ (cf. \Cref{thm:ComponentCount}).
Up to homotopy, Lee's homology has a vanishing differential, and the complex behaves well under link concondance. In his paper \cite{DB3}, Dror BarNatan remarks, ``In a beautiful article Eun Soo Lee
introduced a second differential on the Khovanov complex of a knot (or link) and showed that 
the resulting (double) complex has non-interesting homology. This is a very interesting result."
Rasmussen \cite{JR} uses Lee's result to define invariants of links that give lower bounds for the 
four-ball genus, and determine it for torus knots. Rasmussen's invariant gives an (elementary) proof of a conjecture of Milnor that had been previously shown using gauge theory by Kronheimer and Mrowka \cite{KM1,KM2}.

In \cite{DKK}, Lee homology was generalized to virtual knots and links.  Applications of it to unoriented links can be articulated again with the methods of the present paper. We will carry this out in detail in a future paper.

\section{Future Aims}
\label{section:FutureAims}

This paper has been devoted to formulating an unoriented version of the Jones polynomial (via a normalization of the Kauffman bracket polynomial) and a corresponding version of 
Khovanov homology for virtual knots and links that is an unoriented link invariant. We intend the present paper as a basis for further research and wish to make the following points about future work.\\

\begin{enumerate}

 \item The dependence of the invariant on a choice of orientations is useful in certain contexts.  For example, orientations are useful in the context of oriented cobordisms.  An invariant of the underlying link is useful as well and may inform on unoriented cobordisms. We will explore the unoriented version of Lee homology for virtual links described above  and its applications to cobordisms, genus, and Rasmussen invariants in future research.

\item   This paper grew out of a search for an invariant in a different context: the 2-factor polynomial for ribbon graphs.  A ribbon graph $G$ with a perfect matching $M$ can be made to behave like a knot by orienting the cycles in $G\setminus M$ (see \cite{Baldridge2018}).  However, to define an invariant of  a ribbon graph that is independent of the choice of perfect matchings of the graph {\em and} orientations on the complementary cycles required an ``orientation free'' invariant.  We explore this idea in  \cite{BKR2019}.

\item There is a computer program for virtual homology as formulated by Tubbenhauer (\verb+http://www.dtubbenhauer.com/vKh.html+) and a computer program for Khovanov homology for classical links available at Dror Bar Natan's website (\verb+http://katlas.org/wiki/Khovanov_Homology+).  The authors together with Heather Dye and Aaron Kaestner have generalized Bar Natan's program to calculate the  homology theories discussed in this paper using Theorem~\ref{thm:MDKK}. This will appear in a subsequent paper.\\

\end{enumerate}

At the present time, we know remarkably little about virtual Khovanov homology. It is our intent that this situation will begin to change with the tools developed in this paper.

\end{document}